\newcommand{\pref}[1]{(\ref{#1})}
\newcommand{\be}{\begin{equation}}
\newcommand{\ee}{\end{equation}}
\newcommand{\qed}{{\unskip\nobreak\hfil\penalty50\quad\null\nobreak\hfil
	$\square$\parfillskip0pt\finalhyphendemerits0\par\medskip}}
\newcommand{\vecb}{\mbox{\boldmath $ b $}}
\newcommand{\vecc}{\mbox{\boldmath $ c $}}
\newcommand{\vecf}{\mbox{\boldmath $ f $}}
\newcommand{\veco}{\mbox{\boldmath $ o $}}
\newcommand{\vecx}{\mbox{\boldmath $ x $}}
\newcommand{\veckappa}{\mbox{\boldmath $ \kappa $}}
\newcommand{\vecnu}{\mbox{\boldmath $ \nu $}}
\newcommand{\vecrho}{\mbox{\boldmath $ \rho $}}
\newcommand{\vectau}{\mbox{\boldmath $ \tau $}}
\newtheorem{thm}{Theorem}[section]
\newtheorem{prop}{Proposition}[section]
\newtheorem{lem}{Lemma}[section]
\newtheorem{cor}{Corollary}[section]
\newtheorem{rem}{Remark}[section]
\newtheorem{proof}{\normalfont\itshape Proof.}
\newtheorem{proofa}{\normalfont\itshape Proof of Corollary \ref{Corollary2.1}.}
\title{Interpolation inequalities between the deviation of curvature and the isoperimetric ratio
with applications to geometric flows
\footnote{2010 Mathematics Subject Classification:
53A04,
53C44,
35B40,
35K55}}
\author{Takeyuki Nagasawa
\\
Department of Mathematics, Saitama University
\\
255 Shimo-Okubo, Sakura-ku, Saitama City, Saitama 338-8570, Japan
\\[0.5cm]
Kohei Nakamura
\\
Graduate School of Science and Engineering, Saitama University
\\
255 Shimo-Okubo, Sakura-ku, Saitama City, Saitama 338-8570, Japan}
\date{\today}
\begin{document}
\maketitle
\begin{abstract}
Several inequalities for the isoperimetric ratio for plane curves are derived.
In particular,
we obtain interpolation inequalities between the deviation of curvature and the isoperimetric ratio.
As applications,
we study the large-time behavior of some geometric flows of closed plane curves without a convexity assumption.
\\[0.3cm]
{\it Keywords}: isoperimetric ratio, curvature, interpolation inequalities, geometric flow
\end{abstract}
\section{Introduction}
\par
Let $ \vecf = ( f_1 , f_2 ) \, : \, \mathbb{R} / L \mathbb{Z} \to \mathbb{R}^2 $ be a function such that $ \mathrm{Im} \vecf $ is a closed plane curve with rotation number $ 1 $ and the variable of $ \vecf $ is the arc-length parameter.
The unit tangent vector is $ \vectau = ( f_1^\prime , f_2^\prime ) $.
Let $ \vecnu = ( - f_2^\prime , f_1^\prime ) $ be the inward unit normal vector,
and let $ \veckappa = \vecf^{ \prime \prime } $ be the curvature vector.
The (signed) area $ A $ is given by
\[
	A = - \frac 12 \int_0^L \vecf \cdot \vecnu \, ds .
\]
The curvature $ \kappa = \veckappa \cdot \vecnu $ is positive when $ \mathrm{Im} \vecf $ is convex.
Since the curve has rotation number $ 1 $, the deviation of curvature is
\[
	\tilde \kappa
	=
	\kappa - \frac 1L \int_0^L \kappa \, ds
	=
	\kappa - \frac { 2 \pi } L .
\]
For a non-negative integer $ \ell $,
we set
\[
	I_\ell = L^{ 2 \ell + 1 } \int_0^L | \tilde \kappa^{( \ell )} |^2 ds ,
\]
which is a scale invariant quantity (cf.\
\cite{DKS}).
For $ \ell_1 \leqq \ell_2 \leqq \ell_3 $,
$ I_{ \ell_2 } $ can be interpolated by $ I_{ \ell_1 } $ and $ I_{ \ell_3 } $ using the Gagliardo-Nirenberg inequality.
In this paper, we show that $ I_\ell $ satisfies interpolation inequalities  by use of the isoperimetric ratio and $ I_m $ with $ \ell \leqq m $.
Hereafter we call $ \displaystyle{ \frac { 4 \pi A } { L^2 } } $ the isoperimetric ratio,
not $ \displaystyle{ \frac { L^2 } { 4 \pi A } } $.
We set
\[
	I_{-1} = 1 - \frac { 4 \pi A } { L^2 }
	,
\]
which is also scale invariant,
and is non-negative by the isoperimetric inequality.  
Of course,
$ \tilde \kappa \equiv 0 $ implies $ \mathrm{Im} \vecf $ is a round circle,
which attains the minimum $ I_{-1} = 0 $.
This suggests that $ I_{-1} $ can be dominated by some quantities of $ \tilde \kappa $.
Indeed,
we have
\begin{align*}
	I_{-1}
	= & \
	\frac { L^2 - 4 \pi A } { L^2 }
	=
	\frac 1 { L^2 } \int_0^L \left( - L \vecf \cdot \veckappa + 2 \pi \vecf \cdot \vecnu \right) ds
	\\
	= & \
	- \frac 1L \int_0^L \tilde \kappa ( \vecf \cdot \vecnu ) ds
	\\
	= & \
	- \frac 1L \int_0^L \tilde \kappa \left( \vecf \cdot \vecnu - \frac 1L \int_0^L \vecf \cdot \vecnu \, ds \right) ds
\end{align*}
and
\[
	\left| \vecf \cdot \vecnu - \frac 1L \int_0^L \vecf \cdot \vecnu \, ds \right|
	\leqq L .
\]
Thus it holds that
\[
	0 \leqq
	I_{-1}
	\leqq
	I_0^{ \frac 12 }
	.
\]
However,
since $ \displaystyle{ \vecf \cdot \vecnu - \frac 1L \int_0^L \vecf \cdot \vecnu \, ds = 0 } $ when $ \tilde \kappa \equiv 0 $,
it seems that the above inequality can be improved.
In Section 2,
we will show an improved version
\[
	0 \leqq
	I_{-1}
	\leqq
	\frac { I_0 } { 8 \pi^2 }
\]
in Theorem \ref{Theorem1}.
\par
The converse inequality seems not to hold;
the reason will be clarified in Section 2.
However,
$ I_0 $ can be estimated by use of $ I_{-1} $ with the help of $ \kappa $ and its derivative
\[
	I_0 \leqq I_{-1}^{ \frac 12 }
	\left[ L^3 \int_0^L
	\left\{ \kappa^3 \tilde \kappa + ( \tilde \kappa^\prime )^2 \right\} ds
	\right]^{ \frac 12 }
\]
(see Theorem \ref{Theorem2}).
Combining this inequality and the Gagliardo-Nirenberg inequality,
in Theorem \ref{Theorem3} we will show interpolation inequalities satisfied by $ I_{-1} $,
$ I_\ell $ and $ I_m $ for $ 0 \leqq \ell \leqq m $:
\[
	I_\ell
	\leqq
	C \left( I_{-1}^{ \frac { m - \ell } 2 } I_m + I_{-1}^{ \frac { m - \ell } { m+1 } } I_m^{ \frac { \ell +1 } { m+1 } } \right).
\]
Here the constant $ C $ depends on $ \ell $ and $ m $ but not on $ \tilde \kappa $ nor $ L $.
\par
In the final section we give applications of our inequalities to the analysis of the large-time behavior of some geometric flows of closed plane curves.
Using our inequalities, we can study the analysis of behavior quite easily.
\section{Preliminaries}
\setcounter{equation}{0}
\par
For the vector-valued function $ \vecf = ( f_1 , f_2 ) :\mathbb{R} / L \mathbb{Z} \to \mathbb{R}^2 $,
we define a complex-valued function by
\[
	f = f_1 + i f_2 .
\]
We expand $ f $ by the Fourier series
\[
	f = \sum_{ k \in \mathbb{Z} } \hat f(k) \varphi_k
	,
\]
where
\[
	\varphi_k(s) = \frac 1 { \sqrt L } \exp \left( \frac { 2 \pi i k s } L \right)
	,
	\quad
	\hat f(k) = \int_0^L f \overline{ \varphi_k } ds .
\]
The series $ \displaystyle{ \sum_{ k \in \mathbb{Z} } k^\ell | \hat f(k) |^2 } $ is related to $ \displaystyle{ \left( \frac L { 2 \pi } \right)^\ell \int_0^L \kappa^\ell ds } $.
To see this we need some expression of $ f^{( \ell - 1 )} \overline{ f^\prime } $ in terms of $ \kappa $ and its derivatives.
Set
\[
	F_\ell = f^{(\ell-1)} \overline{ f^\prime } .
\]
\begin{lem}
It holds that
\be
	F_1 = \vecf \cdot \vectau + i \vecf \cdot \vecnu ,
	\quad
	F_\ell = i \kappa F_{ \ell - 1 } + F_{ \ell - 1 }^\prime  \quad \mbox{for} \quad \ell \geqq 2 .
	\label{recurrence}
\ee
\label{Lemma2.1}
\end{lem}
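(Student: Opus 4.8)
The plan is to settle the base case by a direct computation and then to obtain the recurrence from the Leibniz rule together with the single identity $f^{\prime\prime} = i \kappa f^\prime$, which simultaneously encodes the arc-length constraint and the definition of the curvature. For $F_1 = f \overline{f^\prime}$, I would simply substitute $f = f_1 + i f_2$ and $\overline{f^\prime} = f_1^\prime - i f_2^\prime$ and multiply out: the real part is $f_1 f_1^\prime + f_2 f_2^\prime = \vecf \cdot \vectau$ and the imaginary part is $f_2 f_1^\prime - f_1 f_2^\prime = \vecf \cdot \vecnu$, recalling $\vecnu = ( - f_2^\prime , f_1^\prime )$. This gives the first formula in \pref{recurrence}.

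Next I would establish $f^{\prime\prime} = i \kappa f^\prime$. Since $s$ is the arc-length parameter, $|f^\prime|^2 = f^\prime \overline{f^\prime} = 1$; differentiating gives $2 \, \mathrm{Re} ( f^{\prime\prime} \overline{f^\prime} ) = 0$, so $f^{\prime\prime} \overline{f^\prime}$ is purely imaginary, and since $|f^\prime| = 1$ this forces $f^{\prime\prime} = i c \, f^\prime$ for a real-valued function $c$. Comparing the imaginary part of $f^{\prime\prime} \overline{f^\prime} = i c$ with the direct expansion $\mathrm{Im} ( f^{\prime\prime} \overline{f^\prime} ) = f_2^{\prime\prime} f_1^\prime - f_1^{\prime\prime} f_2^\prime = \veckappa \cdot \vecnu = \kappa$ identifies $c = \kappa$. (Equivalently, one may introduce the tangent angle $\theta$ with $f^\prime = e^{i \theta}$, whence $f^{\prime\prime} = i \theta^\prime e^{i \theta}$ and $\theta^\prime = \kappa$.)

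Finally, for $\ell \geqq 2$ I would differentiate $F_{\ell-1} = f^{(\ell-2)} \overline{f^\prime}$ by the product rule:
\[
	F_{\ell-1}^\prime = f^{(\ell-1)} \overline{f^\prime} + f^{(\ell-2)} \overline{f^{\prime\prime}} = F_\ell + f^{(\ell-2)} \overline{f^{\prime\prime}} .
\]
Conjugating $f^{\prime\prime} = i \kappa f^\prime$ (with $\kappa$ real) gives $\overline{f^{\prime\prime}} = - i \kappa \overline{f^\prime}$, hence $f^{(\ell-2)} \overline{f^{\prime\prime}} = - i \kappa F_{\ell-1}$, and rearranging yields $F_\ell = i \kappa F_{\ell-1} + F_{\ell-1}^\prime$.

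I do not expect a genuine obstacle here; the only point requiring care is the sign and normalization in $f^{\prime\prime} = i \kappa f^\prime$, which depend on the orientation and on the convention that $\vecnu = ( - f_2^\prime , f_1^\prime )$ is the inward normal with $\kappa > 0$ on convex curves. Once that identity is fixed with the correct sign, the recurrence is a one-line consequence of the Leibniz rule.
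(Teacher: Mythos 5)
Your proof is correct and follows essentially the same route as the paper: a direct expansion for $F_1$, the identity $F_3 = f^{\prime\prime}\overline{f^\prime} = i\kappa$ (which you repackage as $f^{\prime\prime} = i\kappa f^\prime$ using $|f^\prime|=1$), and the product rule for the recurrence. The paper instead inserts $f^\prime \overline{f^\prime}=1$ to write $f^{(\ell-2)}\overline{f^{\prime\prime}} = F_{\ell-1}\overline{F_3}$, but this is the same computation with the same sign conventions.
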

\begin{proof}
Firstly,
since $ \vectau = ( f_1^\prime , f_2^\prime ) $ and $ \vecnu = ( - f_2^\prime , f_1^\prime ) $,
we have
\[
	F_1 = ( f_1 + i f_2 )( f_1^\prime - i f_2^\prime )
	= \vecf \cdot \vectau + i \vecf \cdot \vecnu .
\]
The recurrence relation is derived from
\[
	F_\ell
	=
	f^{ ( \ell - 1 ) } \overline{ f^\prime }
	=
	\left( f^{ ( \ell - 2 ) } \overline{f^\prime} \right)^\prime
	-
	f^{ (\ell - 2 ) } \overline{ f^{ \prime \prime } }
	=
	F_{ \ell - 1 }^\prime
	-
	f^{ ( \ell - 2 )} \overline{ f^\prime }
	f^\prime \overline{ f^{ \prime \prime } }
	=
	F_{ \ell - 1 }^\prime - F_{ \ell - 1 } \overline{ F_3 }
\]
and
\[
	F_3
	=
	f^{ \prime \prime } \overline{ f^\prime }
	=
	( f_1^{ \prime \prime } + i f_2^{ \prime \prime } ) ( f_1^\prime - i f_2^\prime )
	=
	\frac 12 \left( | f^\prime |^2 \right)^\prime
	+ i ( - f_1^{ \prime \prime } f_2^\prime + f_2^{ \prime \prime } f_1^\prime )
	=
	i \vecf^{ \prime \prime } \cdot \vecnu
	=
	i \kappa .
\]
\qed
\end{proof}
\begin{prop}
For $ \ell \geqq 2 $,
it holds that
\be
	\sum_{ k \in \mathbb{Z} } k^\ell | \hat f(k) |^2
	=
	- i^{ 1 - \ell } \left( \frac L { 2 \pi } \right)^\ell
	\int_0^L \kappa F_{ \ell - 1 } ds
	.
	\label{serell}
\ee
\label{prop2.2}
\end{prop}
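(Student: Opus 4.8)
The plan is to reduce the left-hand side of \pref{serell} to an integral of the form $\int_0^L f^{(\ell-1)}\overline{f'}\,ds$ by means of the recurrence relation in Lemma \ref{Lemma2.1}, and then to evaluate that integral with Parseval's identity for the orthonormal system $\{\varphi_k\}_{k\in\mathbb{Z}}$ of $L^2(0,L)$. Concretely, the relation \pref{recurrence} can be rewritten as $i\kappa F_{\ell-1} = F_\ell - F_{\ell-1}'$ for $\ell\geqq 2$. Integrating this over one period and using that $F_{\ell-1}$ is $L$-periodic, so that $\int_0^L F_{\ell-1}'\,ds=0$, we get
\[
	i\int_0^L \kappa F_{\ell-1}\,ds
	=
	\int_0^L F_\ell\,ds
	=
	\int_0^L f^{(\ell-1)}\overline{f'}\,ds .
\]

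Next I would differentiate the Fourier series $f=\sum_k \hat f(k)\varphi_k$ term by term, using $\varphi_k'=\frac{2\pi i k}{L}\varphi_k$, to obtain $f^{(\ell-1)}=\sum_k\left(\frac{2\pi i k}{L}\right)^{\ell-1}\hat f(k)\varphi_k$ and $f'=\sum_k\frac{2\pi i k}{L}\hat f(k)\varphi_k$. Substituting these into the integral above and using $\int_0^L\varphi_k\overline{\varphi_j}\,ds=\delta_{kj}$ collapses the double sum to a single one:
\[
	\int_0^L f^{(\ell-1)}\overline{f'}\,ds
	=
	\sum_{k\in\mathbb{Z}}\left(\frac{2\pi i k}{L}\right)^{\ell-1}\overline{\left(\frac{2\pi i k}{L}\right)}\,|\hat f(k)|^2
	=
	-i^{\ell}\left(\frac{2\pi}{L}\right)^{\ell}\sum_{k\in\mathbb{Z}}k^{\ell}\,|\hat f(k)|^2 ,
\]
where the last equality uses $\overline{2\pi i k/L}=-2\pi i k/L$. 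Combining the two displays, dividing by $i$, and noting $1/i^{\ell-1}=i^{1-\ell}$ gives \pref{serell} after rearrangement.

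I do not expect a serious obstacle here: once Lemma \ref{Lemma2.1} is available the computation is essentially mechanical. The one point that deserves a word is the legitimacy of differentiating the Fourier series term by term and of Parseval's identity for the derivatives, which is routine as soon as $f$ is sufficiently regular (for instance $f\in H^{\ell}(\mathbb{R}/L\mathbb{Z})$, which holds since $\mathrm{Im}\,\vecf$ is a smooth closed curve). The conceptual heart of the argument is the algebraic identity $i\kappa F_{\ell-1}=F_\ell-F_{\ell-1}'$ coming from \pref{recurrence}: it is exactly what trades the curvature weight for one extra derivative, converting a quantity built from $\kappa$ into one that the Fourier side computes transparently.
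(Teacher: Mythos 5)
Your proof is correct and follows essentially the same route as the paper: integrate the recurrence from Lemma \ref{Lemma2.1} over a period (so the $F_{\ell-1}'$ term drops out) to get $i\int_0^L\kappa F_{\ell-1}\,ds=\int_0^L F_\ell\,ds$, then evaluate $\int_0^L f^{(\ell-1)}\overline{f'}\,ds$ by Parseval. You merely spell out the Fourier computation that the paper states in one line.
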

\begin{proof}
It follows from the recurrence relation in Lemma\ref{Lemma2.1} that
\[
	\int_0^L F_\ell ds = i \int_0^L \kappa F_{ \ell - 1 } ds .
\]
On the other hand,
\[
	\int_0^L F_\ell ds
	=
	\langle f^{( \ell-1 ) } , f^\prime \rangle_{ L^2 }
	=
	- i^\ell \left( \frac { 2 \pi } L \right)^\ell \sum_{ k \in \mathbb{Z} } k^\ell | \hat f(k) |^2
	.
\]
\qed
\end{proof}
\begin{cor}
We have
\begin{align}
	\sum_{ k \in \mathbb{Z} } k | \hat f(k) |^2
	= & \
	\frac { LA } \pi
	\label{ser1}
	,
	\\
	\sum_{ k \in \mathbb{Z} } k^2 | \hat f(k) |^2
	= & \
	\left( \frac L { 2 \pi } \right)^2
	\int_0^L \kappa^0 ds
	=
	\frac { L^3 } { 4 \pi^2 } ,
	\label{ser2}
	\\
	\sum_{ k \in \mathbb{Z} } k^3 | \hat f(k) |^2
	= & \
	\left( \frac L { 2 \pi } \right)^3
	\int_0^L \kappa \, ds
	=
	\frac { L^3 } { 4 \pi^2 } ,
	\label{ser3}
	\\
	\sum_{ k \in \mathbb{Z} } k^4 | \hat f(k) |^2
	= & \
	\left( \frac L { 2 \pi } \right)^4
	\int_0^L \kappa^2 ds
	,
	\label{ser4}
	\\
	\sum_{ k \in \mathbb{Z} } k^5 | \hat f(k) |^2
	= & \
	\left( \frac L { 2 \pi } \right)^5
	\int_0^L \kappa^3 ds
	,
	\label{ser5}
	\\
	\sum_{ k \in \mathbb{Z} } k^6 | \hat f(k) |^2
	= & \
	\left( \frac L { 2 \pi } \right)^6
	\int_0^L \left\{ \kappa^4 + ( \kappa^\prime )^2 \right\} ds.
	\label{ser6}
\end{align}
\label{Corollary2.1}
\end{cor}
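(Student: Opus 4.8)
The plan is to obtain \pref{ser1} by a direct Fourier-side computation of $\int_0^L F_1\,ds$, and to deduce \pref{ser2}--\pref{ser6} from Proposition \ref{prop2.2} after the integrals $\int_0^L\kappa F_{\ell-1}\,ds$ are made explicit.

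For \pref{ser1}, I would compute $\int_0^L F_1\,ds$ in two different ways. By Lemma \ref{Lemma2.1}, $F_1=\vecf\cdot\vectau+i\,\vecf\cdot\vecnu$; since $\vecf\cdot\vectau=\frac12(|\vecf|^2)'$ is an exact derivative its integral vanishes, while $\int_0^L\vecf\cdot\vecnu\,ds=-2A$ by the definition of $A$, so $\int_0^L F_1\,ds=-2iA$. On the other hand, writing $f=\sum_k\hat f(k)\varphi_k$, using $\varphi_k'=\frac{2\pi ik}{L}\varphi_k$ and the orthonormality of $\{\varphi_k\}$, one gets $\int_0^L F_1\,ds=\langle f,f'\rangle_{L^2}=-\frac{2\pi i}{L}\sum_{k\in\mathbb Z}k|\hat f(k)|^2$. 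Comparing the two expressions gives \pref{ser1}.

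For \pref{ser2}--\pref{ser6}, I would first use the identity $\int_0^L F_\ell\,ds=i\int_0^L\kappa F_{\ell-1}\,ds$ already proved within Proposition \ref{prop2.2} to rewrite \pref{serell} in the form
\[
	\sum_{k\in\mathbb Z}k^\ell|\hat f(k)|^2=i^{2-\ell}\left(\frac{L}{2\pi}\right)^{\ell}\int_0^L F_\ell\,ds\qquad(\ell\geqq2).
\]
Next, starting from $F_2=|f'|^2=1$ and $F_3=i\kappa$ (the latter being computed in the proof of Lemma \ref{Lemma2.1}), I would iterate the recurrence \pref{recurrence} to get
\[
	F_4=-\kappa^2+i\kappa',\qquad F_5=-i\kappa^3-3\kappa\kappa'+i\kappa'',\qquad F_6=\kappa^4-3(\kappa')^2-4\kappa\kappa''-6i\kappa^2\kappa'+i\kappa'''.
\]
Finally I would integrate over $\mathbb R/L\mathbb Z$: every exact derivative term ($\kappa'$, $\kappa''$, $\kappa'''$, and $\kappa\kappa'=\frac12(\kappa^2)'$, $\kappa^2\kappa'=\frac13(\kappa^3)'$) has zero integral, $\int_0^L\kappa^0\,ds=L$, $\int_0^L\kappa\,ds=2\pi$ because the rotation number is one, and $\int_0^L\kappa\kappa''\,ds=-\int_0^L(\kappa')^2\,ds$ by integration by parts. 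Inserting these values and simplifying the powers $i^{2-\ell}$ reproduces exactly \pref{ser2}--\pref{ser6}.

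I do not expect a genuine obstacle here. The only delicate points are bookkeeping: keeping track of the factor $i^{2-\ell}$, and of the combinatorial coefficients generated by iterating \pref{recurrence} (in particular the $-4$ in front of $\kappa\kappa''$ and the $-3$ in front of $(\kappa')^2$ in $F_6$, where the contributions of $i\kappa F_5$ and of $F_5'$ must be combined correctly), together with consistent use of the periodicity of $\kappa$ and its derivatives and of $\int_0^L\kappa\,ds=2\pi$. For the even indices $\ell=2,4,6$ one could alternatively read the results off from $\|f^{(\ell/2)}\|_{L^2}^2$ (for instance $\|f'\|_{L^2}^2=\int_0^L|\vectau|^2\,ds=L$ gives \pref{ser2} at once), but treating all $\ell\geqq2$ through $F_\ell$ is more uniform.
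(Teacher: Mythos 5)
Your proposal is correct and follows essentially the same route as the paper: both derive \pref{ser1} by evaluating $\int_0^L F_1\,ds$ geometrically and on the Fourier side, and both obtain \pref{ser2}--\pref{ser6} from Proposition \ref{prop2.2} together with the recurrence \pref{recurrence}. The only cosmetic difference is that you push the recurrence one step further to $F_6$ and integrate $\int_0^L F_\ell\,ds$ directly, whereas the paper stops at $F_5$ and evaluates $\int_0^L\kappa F_{\ell-1}\,ds$; these are identical via $\int_0^L F_\ell\,ds=i\int_0^L\kappa F_{\ell-1}\,ds$, and your expressions for $F_4$, $F_5$, $F_6$ check out.
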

\begin{rem}
If $ \kappa > 0 $ everywhere,
it holds that
\[
	\sum_{ k \in \mathbb{Z} } k | \hat f(k) |^2
	=
	\frac L { 2 \pi } \int_0^L
	\frac 1 \kappa \left\{ 1 - \left( f^\prime \bar f \right)^\prime \right\} ds .
\]
In particular if $ \kappa $ is a constant,
then
\[
	\sum_{ k \in \mathbb{Z} } k | \hat f(k) |^2
	=
	\frac L { 2 \pi } \int_0^L \frac { ds } \kappa
	.
\]
\end{rem}  
\begin{proofa}
Since
\[
	\int_0^L F_1 ds = i \int_0^L \vecf \cdot \vecnu \, ds = - 2 i A ,
\]
we obtain
\[
	\sum_{ k \in \mathbb{Z} } k | \hat f(k) |^2
	=
	\frac L { 2 \pi i } \int_0^L f^\prime \bar f \, ds
	=
	\frac L { 2 \pi i } \int_0^L \overline{ F_1 } ds
	=
	\frac { LA } \pi.
\]
Thus \pref{ser1} follows.
The relations \pref{ser2}--\pref{ser6} are consequence of \pref{serell} and \pref{recurrence}.
Indeed,
\[
	\sum_{ k \in \mathbb{Z} } k^2 | \hat f(k) |^2
	=
	- i^{-1} \left( \frac L { 2 \pi } \right)^2 \int_0^L \kappa F_1 ds
	=
	i \left( \frac L { 2 \pi } \right)^2 \int_0^L \kappa
	\left( \vecf \cdot \vectau + i \vecf \cdot \vecnu \right) ds ,
\]
and
\[
	\int_0^L \kappa \vecf \cdot \vectau \, ds
	=
	- \int_0^L \vecf \cdot \vecnu^\prime ds
	=
	\int_0^L \vectau \cdot \vecnu \, ds = 0 ,
\]
\[
	\int_0^L \kappa \vecf \cdot \vecnu \, ds
	=
	\int_0^L \vecf \cdot \vectau^\prime \, ds
	=
	- \int_0^L \vectau \cdot \vectau \, ds = - L .
\]
Thus \pref{ser2} holds.
Since $ \vecf $ is parametrized by the arc-length,
we have $ F_2 = | f^\prime |^2 = \| \vecf^\prime \|^2 = 1 $.
It follows from \pref{recurrence} that
\[
	F_3 = i \kappa ,
	\quad
	F_4 = - \kappa^2 + i \kappa^\prime ,
	\quad
	F_5 = - 3 \kappa \kappa^\prime + i \left( - \kappa^3 + \kappa^{ \prime \prime } \right).
\]
Hence we obtain
\begin{gather*}
	\int_0^L \kappa F_2 ds = \int_0^L \kappa \, ds = 2 \pi ,
	\quad
	\int_0^L \kappa F_3 ds = i \int_0^L \kappa^2 ds ,
	\\
	\int_0^L \kappa F_4 ds = - \int_0^L \kappa^3 ds ,
	\quad
	\int_0^L \kappa F_5 ds = - i \int_0^L \left\{ \kappa^4 + ( \kappa^\prime )^2 \right\} ds .
\end{gather*}
Consequently \pref{ser3}--\pref{ser6} are obtained from \pref{serell}.
\end{proofa}
\begin{cor}
\[
	I_{-1} = \frac { 4 \pi^2 } { L^3 } \sum_{ k \in \mathbb{Z} } k ( k - 1 ) | \hat f(k) |^2 .
\]
\end{cor}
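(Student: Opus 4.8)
The plan is to read off the claim as an immediate consequence of the two identities \pref{ser1} and \pref{ser2} already established in Corollary \ref{Corollary2.1}, together with the elementary algebraic identity $k(k-1) = k^2 - k$.

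First I would write
\[
	\sum_{ k \in \mathbb{Z} } k ( k - 1 ) | \hat f(k) |^2
	=
	\sum_{ k \in \mathbb{Z} } k^2 | \hat f(k) |^2
	-
	\sum_{ k \in \mathbb{Z} } k | \hat f(k) |^2 ,
\]
which is legitimate since both series on the right converge (they are among the quantities evaluated in Corollary \ref{Corollary2.1}). Next I would substitute \pref{ser2}, namely $\sum_{k} k^2 | \hat f(k) |^2 = \dfrac{L^3}{4\pi^2}$, and \pref{ser1}, namely $\sum_{k} k | \hat f(k) |^2 = \dfrac{LA}{\pi}$, to obtain
\[
	\sum_{ k \in \mathbb{Z} } k ( k - 1 ) | \hat f(k) |^2
	=
	\frac { L^3 } { 4 \pi^2 } - \frac { LA } { \pi } .
\]
Finally I would multiply through by $\dfrac{4\pi^2}{L^3}$ and recognize the result as $1 - \dfrac{4\pi A}{L^2}$, which is precisely the definition of $I_{-1}$ given in the introduction.

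There is no real obstacle here: the statement is a bookkeeping corollary of the preceding Fourier identities, and the only thing to be careful about is that the manipulation termwise is justified because the relevant series are already known to be finite. It is worth remarking in passing that, since $|\hat f(k)|^2 \geqq 0$ and $k(k-1) \geqq 0$ for every integer $k$, this representation gives an alternative proof of the isoperimetric inequality $I_{-1} \geqq 0$, with equality exactly when $\hat f(k) = 0$ for all $k \neq 0, 1$, i.e.\ when $\mathrm{Im}\,\vecf$ is a round circle; this is the point of view that will be exploited in Section 2.
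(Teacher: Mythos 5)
Your proposal is correct and is essentially identical to the paper's own proof: both expand $k(k-1)=k^2-k$ and substitute the identities \pref{ser2} and \pref{ser1} to recover $1-\frac{4\pi A}{L^2}=I_{-1}$. Your closing remark about the isoperimetric inequality is also exactly the observation the paper makes immediately after the corollary.
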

\begin{proof}
We obtain
\[
	I_{-1}
	=
	1 - \frac { 4 \pi A } { L^2 }
	=
	\frac { 4 \pi^2 } { L^3 }
	\left( \frac { L^3 } { 4 \pi^2 } - \frac { LA } \pi \right)
	=
	\frac { 4 \pi^2 } { L^3 }
	\sum_{ k \in \mathbb{Z} } k ( k - 1 ) | \hat f(k) |^2
\]
from \pref{ser2} and \pref{ser1}.
\qed
\end{proof}
\par
Since $ k ( k - 1 ) \geqq 0 $ for $ k \in \mathbb{Z} $,
we obtain the isoperimetric inequality $ I_{-1} \geqq 0 $ from this corollary,
which is essentially the proof by Hurwitz \cite{H}.
\begin{cor}
\[
	I_0 = \frac { 16 \pi^4 } { L^3 } \sum_{ k \in \mathbb{Z} } k^3 ( k - 1 ) | \hat f(k) |^2 .
\]
\end{cor}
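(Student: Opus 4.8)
The plan is to reduce $I_0$ to the $L^2$-norm of the curvature and then substitute the Fourier identities already recorded in Corollary \ref{Corollary2.1}. First I would start from the definition $I_0 = L \int_0^L \tilde\kappa^2\,ds$ and expand the square using $\tilde\kappa = \kappa - 2\pi/L$ together with $\int_0^L \kappa\,ds = 2\pi$. Since
\[
	L \int_0^L \left( \kappa - \frac { 2 \pi } L \right)^2 ds
	=
	L \int_0^L \kappa^2\,ds - 4 \pi \int_0^L \kappa\,ds + \frac { 4 \pi^2 } L \cdot L
	=
	L \int_0^L \kappa^2\,ds - 4 \pi^2 ,
\]
the whole computation is reduced to expressing $L \int_0^L \kappa^2\,ds$ and the constant $4\pi^2$ through the coefficients $| \hat f(k) |^2$.

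Next I would invoke \pref{ser4}, which gives $\int_0^L \kappa^2\,ds = \left( \tfrac { 2 \pi } L \right)^4 \sum_{ k \in \mathbb{Z} } k^4 | \hat f(k) |^2$, hence $L \int_0^L \kappa^2\,ds = \tfrac { 16 \pi^4 } { L^3 } \sum_{ k \in \mathbb{Z} } k^4 | \hat f(k) |^2$. For the constant term I would use \pref{ser3}, namely $\sum_{ k \in \mathbb{Z} } k^3 | \hat f(k) |^2 = \tfrac { L^3 } { 4 \pi^2 }$, which rearranges to $4 \pi^2 = \tfrac { 16 \pi^4 } { L^3 } \sum_{ k \in \mathbb{Z} } k^3 | \hat f(k) |^2$. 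Subtracting the two displays yields
\[
	I_0 = \frac { 16 \pi^4 } { L^3 } \sum_{ k \in \mathbb{Z} } \left( k^4 - k^3 \right) | \hat f(k) |^2 = \frac { 16 \pi^4 } { L^3 } \sum_{ k \in \mathbb{Z} } k^3 ( k - 1 ) | \hat f(k) |^2 ,
\]
which is the claimed identity.

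I do not expect a genuine obstacle here: the argument is entirely parallel to the proof of the previous corollary for $I_{-1}$, the only point requiring care being the bookkeeping of the scaling factors $(L/2\pi)^\ell$ coming from Proposition \ref{prop2.2} and the sign/normalisation conventions in Corollary \ref{Corollary2.1}. It is perhaps worth remarking, as a sanity check, that $k^3(k-1) \geqq 0$ fails for $k = -1$ (where it equals $2$) — in fact $k^3(k-1) \geqq 0$ for all integers $k$ since it is negative only on $0 < k < 1$, which contains no integer — so this representation again exhibits $I_0 \geqq 0$ and vanishes exactly when $\hat f(k) = 0$ for all $k \neq 0, 1$, i.e.\ when $\mathrm{Im}\,\vecf$ is a round circle, consistent with $\tilde\kappa \equiv 0$.
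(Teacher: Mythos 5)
Your proof is correct and follows essentially the same route as the paper: both reduce $I_0$ to $L\int_0^L\kappa^2\,ds - 4\pi^2$ (the paper via $\int_0^L\tilde\kappa^2\,ds = \int_0^L\kappa\tilde\kappa\,ds$, you by expanding the square directly) and then substitute \pref{ser4} and \pref{ser3}. The only blemish is the garbled aside asserting that $k^3(k-1)\geqq 0$ ``fails for $k=-1$ (where it equals $2$)'' --- since $2\geqq 0$ it does not fail there, as your own subsequent correction acknowledges.
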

\begin{proof}
The assertion is derived as
\begin{align*}
	I_0
	= & \
	L \int_0^L \tilde \kappa^2 ds
	=
	L \int_0^L \kappa \tilde \kappa \, ds
	=
	L \left( \int_0^L \kappa^2 ds - \frac { 2 \pi } L \int_0^L \kappa \, ds \right)
	\\
	= & \
	\frac { 16 \pi^4 } { L^3 }
	\sum_{ k \in \mathbb{Z} } k^3 ( k - 1 ) | \hat f(k) |^2,
\end{align*}
using \pref{ser4} and \pref{ser3}.
\qed
\end{proof}
\par
Since $ k ( k - 1 ) \leqq k^3 ( k - 1 ) $ for $ k \in \mathbb{Z} $,
we have
\be
	I_{-1} \leqq
	\frac { 4 \pi^2 } { L^3 }
	\sum_{ k \in \mathbb{Z} } k^3 ( k - 1 ) | \hat f(k) |^2
	=
	\frac { I_0 } { 4 \pi^2 } .
	\label{notsharp}
\ee
We set
\[
	g = \sum_{ k \in \mathbb{Z} } \sqrt{ k ( k - 1 ) } \hat f(k) \varphi_k ,
\]
and then \pref{notsharp} is
\[
	\| g \|_{ L^2 }^2
	\leqq
	\frac { L^2 } { 4 \pi^2 } \| g^\prime \|_{ L^2 }^2 .
\]
This is Wirtinger's inequality with the best constant.
Therefore it is reasonable to think that \pref{notsharp} cannot be sharpened.
However,
the function $ f $ is not an arbitrary one,
but satisfies $ | f^\prime | \equiv 1 $,
and this suggests that we may be able to improve the constant in \pref{notsharp}.
Indeed,
we can show an improved version by use of \pref{ser2} and \pref{ser3}.
\begin{thm}
We have
\[
	I_{-1} \leqq
	\frac { I_0 } { 8 \pi^2 } .
\]
Equality never holds except the trivial case $ \tilde \kappa \equiv 0 $.
\label{Theorem1}
\end{thm}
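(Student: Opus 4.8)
The plan is to argue entirely on the Fourier side. By the two corollaries just established,
\[
	I_{-1} = \frac{4\pi^2}{L^3}\sum_{k\in\mathbb{Z}} k(k-1)\,|\hat f(k)|^2 ,
	\qquad
	I_0 = \frac{16\pi^4}{L^3}\sum_{k\in\mathbb{Z}} k^3(k-1)\,|\hat f(k)|^2 ,
\]
so that
\[
	\frac{I_0}{8\pi^2} - I_{-1}
	=
	\frac{2\pi^2}{L^3}\sum_{k\in\mathbb{Z}} k(k-1)(k^2-2)\,|\hat f(k)|^2 .
\]
The weight $k(k-1)(k^2-2)$ is negative at $k=-1$, so positivity is not yet visible, and this is exactly where the arc-length constraint $|f^\prime|\equiv1$ must be brought in. The key observation is that the right-hand sides of \pref{ser2} and \pref{ser3} are both equal to $L^3/(4\pi^2)$, hence $\sum_{k\in\mathbb{Z}} k^2(k-1)\,|\hat f(k)|^2 = 0$. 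Subtracting this null sum and factoring $k^2-k-2=(k-2)(k+1)$ yields
\[
	\frac{I_0}{8\pi^2} - I_{-1}
	=
	\frac{2\pi^2}{L^3}\sum_{k\in\mathbb{Z}} (k+1)\,k\,(k-1)\,(k-2)\,|\hat f(k)|^2 .
\]
Since $(k+1)k(k-1)(k-2)$ is a product of four consecutive integers, it is nonnegative for every $k\in\mathbb{Z}$, and the inequality $I_{-1}\leqq I_0/(8\pi^2)$ drops out at once.

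For the equality statement, note that $(k+1)k(k-1)(k-2)>0$ for every $k\notin\{-1,0,1,2\}$, so equality forces $\hat f(k)=0$ for all such $k$; that is, $f$ is a trigonometric polynomial supported on the modes $-1,0,1,2$. I would then use the constraint twice more. First, restricting the identity $\sum_{k} k^2(k-1)\,|\hat f(k)|^2=0$ to these four modes gives $|\hat f(-1)|^2=2\,|\hat f(2)|^2$. Second, writing $f^\prime$ as a trigonometric polynomial and imposing $|f^\prime|\equiv 1$, the vanishing of the coefficient of its highest harmonic gives $\hat f(-1)\,\overline{\hat f(2)}=0$. Together these force $\hat f(-1)=\hat f(2)=0$, so $f=\hat f(0)\varphi_0+\hat f(1)\varphi_1$ with $\hat f(1)\neq0$ (otherwise $f$ is constant); the normalization $|f^\prime|\equiv1$ then makes $\mathrm{Im}\,\vecf$ a round circle, whence $\tilde\kappa\equiv0$. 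The converse implication is trivial.

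I expect the only genuine difficulty to be this last step—excluding the spurious equality configurations that carry nonzero $\hat f(-1)$ and $\hat f(2)$—for which the curve condition $|f^\prime|\equiv1$ must be exploited (alternatively one can invoke that the rotation number equals $1$ to discard the circles traced with winding number $-1$ or $2$). The inequality itself is essentially one line, once the two Fourier identities are combined to make the product of four consecutive integers appear.
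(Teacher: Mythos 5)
Your proof is correct and follows essentially the same route as the paper: both exploit the identity $\sum_k k^2(k-1)|\hat f(k)|^2=0$ coming from \pref{ser2} and \pref{ser3} to produce the nonnegative weight $(k+1)k(k-1)(k-2)$, and both handle the equality case by combining that identity on the surviving modes $\{-1,0,1,2\}$ with the top-harmonic coefficient of $|f'|^2\equiv 1$ to kill $\hat f(-1)$ and $\hat f(2)$. The only differences are cosmetic (where the null sum is subtracted), so there is nothing to add.
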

\begin{proof}
First observe that \pref{ser2} and \pref{ser3} imply
\[
	\sum_{ k \in \mathbb{Z} } k^2 | \hat f(k) |^2
	=
	\sum_{ k \in \mathbb{Z} } k^3 | \hat f(k) |^2
	.
\]
Hence
\begin{gather*}
	I_{-1}
	=
	\frac { 4 \pi^2 } { L^3 } \sum_{ k \in \mathbb{Z} }
	k ( k^2 - 1 ) | \hat f(k) |^2
	=
	\frac { 4 \pi^2 } { L^3 } \sum_{ k \in \mathbb{Z} }
	k ( k - 1 ) ( k + 1 ) | \hat f(k) |^2
	,
	\\
	I_0
	=
	\frac { 16 \pi^4 } { L^3 } \sum_{ k \in \mathbb{Z} }
	k^2 ( k^2 - 1 ) | \hat f(k) |^2
	=
	\frac { 16 \pi^4 } { L^3 } \sum_{ k \in \mathbb{Z} }
	k^2 ( k - 1 ) ( k+1 )| \hat f(k) |^2 .
\end{gather*}
Consequently we obtain
\[
	\frac { I_0 } { 8 \pi^2 } - I_{-1}
	=
	\frac { 2 \pi^2 } { L^3 }
	\sum_{ k \in \mathbb{Z} }
	k ( k - 2 ) ( k - 1 ) ( k +1 ) | \hat f(k)|^2
	\geqq 0 ,
\]
because $ k ( k - 2 ) ( k - 1 ) ( k + 1 ) \geqq 0 $ for $ k \in \mathbb{Z} $.
\par
For the equality case, assume that $ f $ satisfies $ \displaystyle{ I_{-1} = \frac { I_0 } { 8 \pi^2 } } $.
It follows from the previous paragraph that
\[
	f = \sum_{ k=-1 }^2 \hat f(k) \varphi_k .
\]
Since $ | f^\prime |^2 \equiv 1 $ and since $ \varphi_k \overline{ \varphi_\ell } = L^{ \frac 12 } \varphi_{ k - \ell } $,
we have
\begin{align*}
	1 = & \
	\left| \sum_{ k=-1}^2 \frac { 2 \pi i k } L \hat f(k) \varphi_k \right|^2
	=
	\frac { 4 \pi^2 } { L^2 } \sum_{ k, \ell = - 1 }^2
	k \ell \hat f(k) \overline{ \hat f(\ell) } \varphi_k \overline{ \varphi_\ell }
	\\
	= & \
	\frac { 4 \pi^2 } { L^{ \frac 32 } }
	\sum_{ m = -3 }^3
	\sum_{ \substack{ k - \ell = m \\[1pt] -1 \leqq k \leqq 2 , \, -1 \leqq \ell \leqq 2} }
	k \ell \hat f(k) \overline{ \hat f(\ell) } \varphi_m
	.
\end{align*}
From this we find that,
in particular,
\[
	0
	=
	\sum_{ \substack{ k - \ell = 3 \\[1pt] -1 \leqq k \leqq 2 , \, -1 \leqq \ell \leqq 2} }
	k \ell \hat f(k) \overline{ \hat f(\ell) }
	=
	- 2 \hat f_2 \overline{ \hat f_{-1} } .
\]
On the other hand,
it follows from \pref{ser2} and \pref{ser4} that
\[
	0 = \sum_{ k=-1 }^2 k^2 ( k - 1 ) | \hat f(k) |^2
	=
	- 2 | \hat f_{-1} |^2 + 4 | \hat f_2 |^2 .
\]
Therefore $ \hat f_{-1} = \hat f_2 = 0 $.
Consequently $ f = \hat f_0 \varphi_0 + \hat f_1 \varphi_1 $,
which implies $ \mathrm{Im} \vecf $ is a round circle.
Hence $ \tilde \kappa \equiv 0 $.
\qed
\end{proof}
\par
We remark that it is impossible to show
\[
	k^3 ( k - 1 ) \leqq C k ( k - 1 ) \quad \mbox{for} \quad k \in \mathbb{Z} ,
\]
and this implies that there is no hope to see $ I_0 \leqq C I_{-1} $.
Thereupon we give an estimate of $ I_0 $ in terms of $ I_{-1} $ with the help of $ \kappa $ and its derivative.
\begin{thm}
The integral $ \displaystyle{ \int_0^L \left\{ \kappa^3 \tilde \kappa + ( \tilde \kappa^\prime )^2 \right\} ds } $ is non-negative,
and it holds that
\[
	I_0 \leqq I_{-1}^{ \frac 12 }
	\left[ L^3 \int_0^L
	\left\{ \kappa^3 \tilde \kappa + ( \tilde \kappa^\prime )^2 \right\} ds
	\right]^{ \frac 12 }
	.
\]
Equality never holds except the trivial case $ \tilde \kappa \equiv 0 $.
\label{Theorem2}
\end{thm}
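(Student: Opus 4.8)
The plan is to translate everything into Fourier coefficients of $f$, where both assertions become elementary, and then to treat the equality case by the method used for Theorem~\ref{Theorem1}. (We may assume all integrals below are finite, i.e.\ $\kappa\in H^1$; otherwise the bracketed quantity on the right-hand side is $+\infty$ and the inequality is trivial.)

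First I would rewrite the integrand on the right. Since $\tilde\kappa = \kappa - 2\pi/L$ we have
\[
	\kappa^3\tilde\kappa + (\tilde\kappa^\prime)^2 = \kappa^4 + (\kappa^\prime)^2 - \frac{2\pi}{L}\kappa^3 ,
\]
so \pref{ser5} and \pref{ser6} yield
\[
	L^3\int_0^L\left\{\kappa^3\tilde\kappa + (\tilde\kappa^\prime)^2\right\}ds
	=
	\frac{64\pi^6}{L^3}\sum_{k\in\mathbb{Z}}k^5(k-1)|\hat f(k)|^2 .
\]
Since $k^5(k-1) = k^4\cdot k(k-1)\geqq 0$ for every $k\in\mathbb{Z}$, the right-hand side is non-negative, and it vanishes only when $\hat f(k) = 0$ for all $k\notin\{0,1\}$, i.e.\ only in the trivial case.

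Next, using the two corollaries above that express $I_{-1}$ and $I_0$ through Fourier coefficients, together with the formula just derived, the asserted inequality becomes, after squaring, equivalent to
\[
	\left(\sum_{k\in\mathbb{Z}}k^3(k-1)|\hat f(k)|^2\right)^{2}
	\leqq
	\left(\sum_{k\in\mathbb{Z}}k(k-1)|\hat f(k)|^2\right)\left(\sum_{k\in\mathbb{Z}}k^5(k-1)|\hat f(k)|^2\right) ,
\]
which is the Cauchy--Schwarz inequality for the sequences $a_k = \sqrt{k(k-1)}\,|\hat f(k)|$ and $b_k = \sqrt{k^5(k-1)}\,|\hat f(k)|$; indeed $a_kb_k = \sqrt{k^6(k-1)^2}\,|\hat f(k)|^2 = k^3(k-1)|\hat f(k)|^2$, again because $k^3(k-1)\geqq 0$.

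Finally, for the equality statement I would argue as in the proof of Theorem~\ref{Theorem1}. One checks $b_k = k^2a_k$ for all $k$, so equality in Cauchy--Schwarz forces $(k^2-\lambda)a_k = 0$ for some $\lambda\geqq 0$ and all $k$; hence $\hat f(k) = 0$ unless $k\in\{0,1\}$ or $k^2 = \lambda$, i.e.\ $\hat f$ is supported in $\{0,1\}\cup\{-n,n\}$ for some integer $n\geqq 1$. The relation obtained by subtracting \pref{ser3} from \pref{ser2}, namely $\sum_{k\in\mathbb{Z}}k^2(k-1)|\hat f(k)|^2 = 0$ (already used in the proof of Theorem~\ref{Theorem1}), then gives $(n-1)|\hat f(n)|^2 = (n+1)|\hat f(-n)|^2$; for $n=1$ this forces $\hat f(-1) = 0$. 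For $n\geqq 2$ I would use $|f^\prime|\equiv 1$: expanding $|f^\prime|^2$ in Fourier series as in the proof of Theorem~\ref{Theorem1}, its coefficient at frequency $2n$ is a non-zero constant multiple of $\hat f(n)\overline{\hat f(-n)}$ and therefore vanishes, so one of $\hat f(\pm n)$ is zero and then, by the previous identity, so is the other. In every case $\hat f$ is supported in $\{0,1\}$, hence $\mathrm{Im}\,\vecf$ is a round circle and $\tilde\kappa\equiv 0$. The step I expect to require the most care is this last one, i.e.\ repeating the $|f^\prime|\equiv 1$ computation of Theorem~\ref{Theorem1} to eliminate the frequencies $\pm n$.
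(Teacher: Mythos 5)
Your proof of the inequality itself is essentially identical to the paper's: both apply the Cauchy--Schwarz inequality to the Fourier-coefficient expressions of $I_{-1}$ and $I_0$, and both identify $\sum_k k^5(k-1)|\hat f(k)|^2$ with the bracketed integral via \pref{ser5} and \pref{ser6}; your explicit remark that $k^5(k-1)=k^4\cdot k(k-1)\geqq 0$ settles the non-negativity claim in the same way. The one place you genuinely diverge is the equality case, and there your argument is actually more complete than the paper's. The paper simply asserts that equality in Cauchy's inequality forces $\hat f(k)=0$ for $k\notin\{0,1\}$, but proportionality of the sequences $a_k=\sqrt{k(k-1)}\,|\hat f(k)|$ and $b_k=k^2a_k$ only forces the support (outside $\{0,1\}$, where $a_k$ vanishes automatically) into a single resonant pair $\{-n,n\}$; you correctly notice this residual possibility and eliminate it using the identity $\sum_k k^2(k-1)|\hat f(k)|^2=0$ from \pref{ser2}--\pref{ser3} together with the vanishing of the frequency-$2n$ coefficient of $|f^\prime|^2\equiv 1$ (checking, as you should, that for $n\geqq 2$ no other pair of supported frequencies produces the difference $2n$). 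So your write-up both matches the paper's route and quietly repairs a gap in its equality discussion.
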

\begin{proof}
By Cauchy's inequality we have
\begin{align*}
	I_0
	\leqq & \
	\frac { 16 \pi^4 } { L^3 }
	\left\{
	\sum_{ k \in \mathbb{Z} } k ( k - 1 ) | \hat f(k) |^2 \right\}^{ \frac 12 }
	\left\{
	\sum_{ k \in \mathbb{Z} } k^5 ( k - 1 ) | \hat f(k) |^2 \right\}^{ \frac 12 }
	\\
	= & \
	\frac { 8 \pi^3 } { L^{ \frac 32 } } I_{-1}^{ \frac 12 }
	\left\{
	\sum_{ k \in \mathbb{Z} } k^5 ( k - 1 ) | \hat f(k) |^2 \right\}^{ \frac 12 }
	,
\end{align*}
and \pref{ser5} and \pref{ser6} show that
\begin{align*}
	\sum_{ k \in \mathbb{Z} } k^5 ( k - 1 ) | \hat f(k) |^2
	= & \
	\left( \frac L { 2 \pi } \right)^6
	\int_0^L
	\left\{ \kappa^4 + ( \kappa^\prime )^2 - \frac { 2 \pi } L \kappa^3 \right\}
	ds
	\\
	= & \
	\left( \frac L { 2 \pi } \right)^6
	\int_0^L
	\left\{ \kappa^3 \tilde \kappa + ( \tilde \kappa^\prime )^2 \right\}
	ds
	.
\end{align*}
\par
Assume that $ f $ satisfies the equality case in the assertion.
It follows from the equality condition of Cauchy's inequality that $ \hat f_k = 0 $ except $ k = 0 $ and $ 1 $.
Consequently $ \mathrm{Im} \vecf $ is a round circle,
and $ \tilde \kappa \equiv 0 $.
\qed
\end{proof}
\section{Interpolation inequalities}
\setcounter{equation}{0}
\par
In this section we derive several interpolation inequalities from Theorem \ref{Theorem2}.
\begin{thm}
Let $ 0 \leqq \ell \leqq m $.
There exists a positive constant $ C = C ( \ell,m ) $ independent of $ L $ such that
\[
	I_\ell
	\leqq
	C \left( I_{-1}^{ \frac { m - \ell } 2 } I_m + I_{-1}^{ \frac { m - \ell } { m+1 } } I_m^{ \frac { \ell +1 } { m+1 } } \right)
\]
holds.
\label{Theorem3}
\end{thm}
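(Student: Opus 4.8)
The plan is to reduce the general statement to the case $\ell=0$ and then interpolate. Throughout I will use, without further comment, two elementary facts about the mean‑zero $L$‑periodic function $\tilde\kappa$: first, the Poincar\'e--Wirtinger inequality and its higher‑order companions, which in scale‑invariant form read $I_0\leqq(2\pi)^{-2}I_1$ and, more generally, $I_\ell\leqq I_0^{1-\ell/m}I_m^{\ell/m}$ for $0\leqq\ell\leqq m$, both being H\"older's inequality applied to the Fourier series of $\tilde\kappa$; second, the one‑dimensional Agmon inequality $\|\tilde\kappa\|_{L^\infty}^2\leqq 2\|\tilde\kappa\|_{L^2}\|\tilde\kappa'\|_{L^2}$, valid because a mean‑zero function on a circle vanishes somewhere. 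Here and below $C$ denotes a constant depending only on $\ell$ and $m$. Since $m=0$ forces $\ell=0$ and is trivial, assume $m\geqq1$.

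Step 1 is to control the integral on the right‑hand side of Theorem~\ref{Theorem2}. Writing $\kappa=\tilde\kappa+2\pi/L$, expanding $\kappa^3\tilde\kappa$, and using $\int_0^L\tilde\kappa\,ds=0$ to drop the term linear in $\tilde\kappa$, one gets
\[
 L^3\int_0^L\Bigl\{\kappa^3\tilde\kappa+(\tilde\kappa')^2\Bigr\}\,ds
 =I_1+L^3\int_0^L\tilde\kappa^4\,ds+6\pi L^2\int_0^L\tilde\kappa^3\,ds+12\pi^2I_0 .
\]
By the Agmon inequality, $L^3\int_0^L\tilde\kappa^4\,ds\leqq L^3\|\tilde\kappa\|_{L^\infty}^2\|\tilde\kappa\|_{L^2}^2\leqq 2\,I_0^{3/2}I_1^{1/2}$, and by H\"older, $L^2\bigl|\int_0^L\tilde\kappa^3\,ds\bigr|\leqq\bigl(L^3\int_0^L\tilde\kappa^4\,ds\bigr)^{1/2}I_0^{1/2}$. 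Feeding these into the identity, then using Young's inequality followed by $I_0\leqq(2\pi)^{-2}I_1$, should give
\[
 L^3\int_0^L\Bigl\{\kappa^3\tilde\kappa+(\tilde\kappa')^2\Bigr\}\,ds\leqq C\bigl(I_1+I_1^2\bigr),
\]
whence, by Theorem~\ref{Theorem2}, $I_0^2\leqq C\,I_{-1}\bigl(I_1+I_1^2\bigr)$.

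Step 2 settles $\ell=0$. We may assume $I_0>0$. Inserting $I_1\leqq I_0^{(m-1)/m}I_m^{1/m}$ into the last bound yields
\[
 I_0^2\leqq C\,I_{-1}\Bigl(I_0^{(m-1)/m}I_m^{1/m}+I_0^{2(m-1)/m}I_m^{2/m}\Bigr).
\]
Since the left side is at most $2C\,I_{-1}$ times the larger of the two summands, at least one of the inequalities $I_0^2\leqq 2C\,I_{-1}I_0^{(m-1)/m}I_m^{1/m}$ and $I_0^2\leqq 2C\,I_{-1}I_0^{2(m-1)/m}I_m^{2/m}$ must hold; dividing by the appropriate power of $I_0$ and solving gives $I_0\leqq C\,I_{-1}^{m/(m+1)}I_m^{1/(m+1)}$ in the first case and $I_0\leqq C\,I_{-1}^{m/2}I_m$ in the second, so in either event
\[
 I_0\leqq C\Bigl(I_{-1}^{m/2}I_m+I_{-1}^{m/(m+1)}I_m^{1/(m+1)}\Bigr).
\]
For general $0\leqq\ell\leqq m$ one then applies $I_\ell\leqq I_0^{(m-\ell)/m}I_m^{\ell/m}$, substitutes this bound for $I_0$, and uses $(x+y)^\theta\leqq x^\theta+y^\theta$ with $\theta=(m-\ell)/m\in[0,1]$; computing the exponents of $I_{-1}$ and $I_m$ in the two resulting terms reproduces exactly $I_{-1}^{(m-\ell)/2}I_m$ and $I_{-1}^{(m-\ell)/(m+1)}I_m^{(\ell+1)/(m+1)}$, which is the assertion.

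I expect the only real obstacle to be Step~1: although the integrand in Theorem~\ref{Theorem2} involves the full curvature $\kappa$ and not merely its deviation $\tilde\kappa$, the vanishing of $\int_0^L\tilde\kappa\,ds$ eliminates the one term in the expansion that could not be absorbed, and one is left with a quantity controlled by $I_0$ and $I_1$ alone, hence by $I_1+I_1^2$. Once this is in hand, the dichotomy of Step~2 and the exponent arithmetic for general $\ell$ are routine, though it is worth checking that no constant in the argument depends on $L$.
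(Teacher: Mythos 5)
Your proof is correct and follows essentially the same route as the paper: both rest on Theorem~\ref{Theorem2}, bound $L^3\int_0^L\{\kappa^3\tilde\kappa+(\tilde\kappa')^2\}\,ds$ by $C(I_1+I_1^2)$ via interpolation of $\int\tilde\kappa^4$ and $\int|\tilde\kappa|^3$ between $I_0$ and $I_1$, then interpolate $I_1$ between $I_0$ and $I_m$ to solve for $I_0$, and finish with $I_\ell\leqq I_0^{1-\ell/m}I_m^{\ell/m}$. Your elementary substitutes (Agmon and Fourier--H\"older in place of the cited Gagliardo--Nirenberg inequalities, and the two-term dichotomy in place of Young's inequality with $\epsilon$-absorption) are equivalent to the paper's steps and all constants are indeed $L$-independent.
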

\begin{proof}
When $ \ell = m $,
the assertion is clear.
\par
Let $ \ell < m $.
Then $ m \geqq 1 $.
The Gagliardo-Nirenberg inequality shows
\be
	\left( L^{ ( j + 1 ) p - 1 } \int_0^L | \tilde \kappa^{(j)} |^p ds \right)^{ \frac 1p }
	\leqq
	C(j,m,p) I_m^{ \frac 1 { 2m } \left( j - \frac 1p + \frac 12 \right) }
	I_0^{ \frac 12 \left\{ 1 - \frac 1m \left( j - \frac 1p + \frac 12 \right) \right\} }
	\label{GN}
\ee
for $ p \geqq 2 $ and $ j \leqq m $.
Here $ C ( j,m,p ) $ is independent of $ L $.
Combining this and Wirtinger's inequality $ \displaystyle{ I_0 \leqq \frac { I_1 } { 4 \pi^2 } } $,
we have
\[
	L^3 \int_0^L \tilde \kappa^4 ds
	\leqq
	C I_1^{ \frac 12 } I_0^{ \frac 32 }
	\leqq
	C I_1^2
	,
	\quad
	L_2 \int_0^L | \tilde \kappa |^3 ds
	\leqq
	C I_1^{ \frac 14 } I_0^{ \frac 54 }
	\leqq
	C I_1^{ \frac 32 }
	.
\]
Therefore
\begin{align*}
	L^3 \int_0^L \kappa^3 \tilde \kappa \, ds
	= & \
	L^3 \int_0^L \left( \tilde \kappa + \frac { 2 \pi } L \right)^3 \tilde \kappa \, ds
	\\
	\leqq & \
	C \left(
	L^3 \int_0^L \tilde \kappa^4 ds
	+
	L^2 \int_0^L | \tilde \kappa |^3 ds
	+
	L \int_0^L \tilde \kappa^2 ds
	\right)
	\\
	\leqq & \
	C \left( I_1^2 + I_1^{ \frac 32 } + I_1 \right)
	.
\end{align*}
Consequently the inequality in Theorem \ref{Theorem2} implies
\[
	I_0
	\leqq
	C I_{-1}^{ \frac 12 }
	\left( I_1 + I_1^{ \frac 12 } \right) ,
\]
which is the assertion with $ \ell = 0 $,
$ m = 1 $.
\par
Putting $ p = 2 $ in \pref{GN},
we have
\be
	I_j \leqq C( j,m ) I_m^{ \frac jm } I_0^{ 1 - \frac jm } .
	\label{GN2}
\ee
Combining these with $ j = 1 $ and Young's inequality,
we have
\[
	I_0
	\leqq
	C I_{-1}^{ \frac 12 }
	\left( I_m^{ \frac 1m } I_0^{ 1 - \frac 1m } + I_m^{ \frac 1 { 2m } } I_0^{ \frac 12 \left( 1 - \frac 1m \right) } \right)
	\leqq
	\epsilon I_0
	+
	C_\epsilon \left( I_{-1}^{ \frac m2 } I_m + I_{-1}^{ \frac m { m+1 } } I_m^{ \frac 1 { m+1 } } \right)
	,
\]
where $ \epsilon $ is an arbitrary positive number.
Consequently we obtain
\[
	I_0
	\leqq
	C \left( I_{-1}^{ \frac m2 } I_m + I_{-1}^{ \frac m { m+1 } } I_m^{ \frac 1 { m+1 } } \right)
	,
\]
which is the assertion with $ \ell = 0 $,
$ m \geqq 2 $.
\par
Let $ \ell \geqq 1 $.
Using the above inequality  and \pref{GN2} with $ j = \ell $,
we obtain
\[
	I_\ell
	\leqq
	C I_m^{ \frac \ell m } 
	\left( I_{-1}^{ \frac m2 } I_m + I_{-1}^{ \frac m { m+1 } } I_m^{ \frac 1 { m+1 } } \right)^{ 1 - \frac \ell m }
	\leqq
	C \left( I_{-1}^{ \frac { m - \ell } 2 } I_m + I_{-1}^{ \frac { m - \ell } { m+1 } } I_m^{ \frac { \ell +1 } { m+1 } } \right)
	.
\]
\qed
\end{proof}
\section{Applications to geometric flows}
\setcounter{equation}{0}
\par
We give applications of our inequalities to the asymptotic analysis of geometric flows of closed plane curves.
One of the flows is a curvature flow with a non-local term firstly studied by Jiang-Pan \cite{JP},
and another is the area-preserving curvature flow considered by Gage \cite{G}.
If the initial curve is convex,
then the flows exist for all time keeping the convexity,
and the curve approaches a round circle,
this was shown in \cite{JP,G}.
The local existence of flows without a convexity assumption was shown by \v{S}ev\v{c}ovi\v{c}-Yazaki \cite{SY}.
However,
the large-time behavior for this case is still open,
and seems to may occur
finite-time blow-up for some non-convex initial curve \cite{M},
and the global existence for another initial non-convex curve \cite{SY}.
Escher-Simonett \cite{ES} showed the global existence and investigated the large-time behavior of the area-preserving curvature flow for initial data close to a circle and without a convexity assumption.
In this section,
we investigate the large-time behavior of the flow without a convexity assumption {\it assuming} the global existence.
As shown below, our method is quite easy thanks to our inequalities.
\subsection{A curvature flow with a non-local term}
Firstly we consider the large-time behavior of geometric flow
\be
	\partial_t \vecf = \veckappa - \frac L { 2A } \vecnu
	\label{JiangPan}
\ee
of closed plane curves.
Assume that $ \displaystyle{ \vecf \, : \, \bigcup_{ t \geqq 0 } \left( \mathbb{R} / L(t) \mathbb{Z} \times \{t \} \right) \to \mathbb{R}^2 } $ is a global solution with initial rotation number 1.
Along the flow,
the (signed) area $ A $ and the isoperimetric ratio $ \displaystyle{ \frac { 4 \pi A } { L^2 } } $ are non-decreasing,
if the initial (signed) area is positive.
Indeed,
\be
	\frac { d A } { dt } =
	- \int_0^L \partial_t \vecf \cdot \vecnu \, ds
	=
	\int_0^L \left( - \kappa + \frac L { 2A } \right) ds
	=
	\frac { - 4 \pi A + L^2 } { 2A } \geqq 0 ,
	\label{dA/dt}
\ee
\begin{align}
	\frac d { dt } \frac { 4 \pi A } { L^2 }
	= & \
	\frac { 4 \pi } { L^2 }\frac { d A } { dt }
	-
	\frac { 8 \pi A } { L^3 } \frac { d L } { dt }
	=
	\frac { 8 \pi A } { L^3 }
	\int_0^L \partial_t \vecf \cdot
	\left( - \frac L { 2A } \vecnu + \veckappa \right)
	ds
        \nonumber
	\\
	= & \
	\frac { 8 \pi A } { L^3 }
	\int_0^L \left( \kappa - \frac L { 2A } \right)^2 ds
	\geqq 0 .
	\label{d{iso.ratio}/dt}
\end{align}
Since
\be
	\frac { d L } { dt }
	=
	- \int_0^L \partial_t \vecf \cdot \veckappa \, ds
	=
	- \int_0^L \kappa^2 ds
	+ \frac { \pi L } A
	,
	\label{dL/dt}
\ee
we find that $ L $ is non-increasing by Gage's inequality if $ \mathrm{Im} \vecf $ is convex.
Here we do not assume convexity.
\begin{thm}
Assume that $ \vecf $ is a global solution of {\rm \pref{JiangPan}} such that the initial rotation number is $ 1 $ and the initial {\rm (}signed{\rm )} area is positive.
Then $ \mathrm{Im} \vecf $ converges to a circle exponentially as $ t \to \infty $ in the following sense.
There exist $ C > 0 $ and $ \lambda > 0 $ independent of $ t $ such that
\[
	0 \leqq
	I_{-1} (t)
	\leqq
	C e^{ - \lambda t }.
\]
Furthermore,
$ A(t) $ and $ L(t) $ converge to positive constants,
say $ A_\infty $ and $ L_\infty $ respectively,
as $ t \to \infty $.
They satisfy $ 4 \pi A_\infty = L_\infty $,
and
\[
	0 \leqq A_\infty - A(t) \leqq C e^{ - \lambda t } ,
	\quad
	| L(t) - L_\infty | \leqq C e^{ - \lambda t } .
\]
\label{Theorem4}
\end{thm}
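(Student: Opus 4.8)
\emph{Outline of the proof.} The strategy is to derive a closed differential inequality for $I_{-1}$ along the flow, combine it with Theorem \ref{Theorem1}, and bootstrap; the convergence of $A$ and $L$ then follows by integration. First I would record the basic identities. Since $\kappa=\tilde\kappa+\frac{2\pi}{L}$ and $\int_0^L\tilde\kappa\,ds=0$ we have $L\int_0^L\kappa^2\,ds=I_0+4\pi^2$, so a direct computation from \pref{dL/dt} and \pref{dA/dt} gives
\[
	\frac{d}{dt}\bigl(L^2-4\pi A\bigr)=-2I_0\leqq0 ,
	\qquad
	\frac{d}{dt}I_{-1}=-\frac{8\pi A}{L^4}\,I_0-\frac{2\pi}{A}\,I_{-1}^2\leqq0 ,
\]
where I used $L^2-4\pi A=L^2 I_{-1}$. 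In particular $I_{-1}$ and $L^2-4\pi A$ are non-increasing, and \pref{dA/dt} reads $\frac{dA}{dt}=\frac{L^2 I_{-1}}{2A}\geqq0$, so $A(t)\geqq A(0)>0$ with $A$ non-decreasing.

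The key soft step is that $I_{-1}(t)\to0$ as $t\to\infty$. Integrating $\frac{d}{dt}(L^2-4\pi A)=-2I_0$ over $[0,\infty)$ shows $\int_0^\infty I_0\,dt\leqq\frac12\bigl(L(0)^2-4\pi A(0)\bigr)<\infty$, hence by Theorem \ref{Theorem1} also $\int_0^\infty I_{-1}\,dt<\infty$; since $I_{-1}$ is non-negative and non-increasing, this forces $I_{-1}(t)\to0$. This is the only subtle point, and it reflects the \emph{absence of a convexity assumption}: we have no Gage-type lower bound for $\int_0^L\kappa^2\,ds$, so a priori $L(t)$ need not stay bounded, the coefficient $A/L^4$ in the differential inequality above could degenerate, and this alone would give at best polynomial decay.

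To upgrade to exponential decay I would rewrite \pref{dL/dt} as $\frac{dL}{dt}=\frac1L\bigl(-I_0+\frac{4\pi^2 I_{-1}}{1-I_{-1}}\bigr)$ and use $I_0\geqq8\pi^2 I_{-1}$ from Theorem \ref{Theorem1} to conclude that $\frac{dL}{dt}<0$ as soon as $I_{-1}<\frac12$. Choosing $T$ with $I_{-1}(T)<\frac12$ (possible since $I_{-1}\to0$), monotonicity of $I_{-1}$ gives $I_{-1}(t)<\frac12$ and therefore $L(t)\leqq L(T)$ for all $t\geqq T$; hence $\frac{A}{L^4}\geqq\frac{A(0)}{L(T)^4}=:c_0>0$ on $[T,\infty)$. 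Feeding this and Theorem \ref{Theorem1} back into the differential inequality for $I_{-1}$ yields $\frac{d}{dt}I_{-1}\leqq-8\pi c_0 I_0\leqq-64\pi^3 c_0\,I_{-1}$ for $t\geqq T$, so $I_{-1}(t)\leqq I_{-1}(T)e^{-\lambda(t-T)}$ with $\lambda:=64\pi^3 c_0$; since $I_{-1}\leqq I_{-1}(0)$ on $[0,T]$, this gives $0\leqq I_{-1}(t)\leqq Ce^{-\lambda t}$ for all $t\geqq0$.

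Finally, the convergence of $A$ and $L$ follows by integration. For $t\geqq T$, $\frac{dA}{dt}=\frac{L^2 I_{-1}}{2A}\leqq\frac{L(T)^2}{2A(0)}Ce^{-\lambda t}$, so the non-decreasing function $A$ is bounded, converges to some $A_\infty\geqq A(0)>0$, and $0\leqq A_\infty-A(t)\leqq Ce^{-\lambda t}$. Then $L^2-4\pi A=L^2 I_{-1}\leqq L(T)^2Ce^{-\lambda t}\to0$, so $L(t)^2\to4\pi A_\infty=:L_\infty^2$ with $L_\infty>0$, and from $|L^2-L_\infty^2|\leqq(L^2-4\pi A)+4\pi(A_\infty-A)\leqq Ce^{-\lambda t}$ together with the lower bound $L\geqq\sqrt{4\pi A(0)}>0$ (which holds because $L^2\geqq4\pi A$) one obtains $|L(t)-L_\infty|\leqq Ce^{-\lambda t}$. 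The relation $4\pi A_\infty=L_\infty^2$ is exactly $I_{-1}(\infty)=0$, i.e. the limit is a round circle. The main obstacle, as noted, is the a priori control of $L$ in the non-convex setting; it is circumvented by the two-stage scheme — first $I_{-1}\to0$ from time-integrability of $I_0$, then monotonicity of $L$ for large $t$, which restores the uniform bound $A/L^4\geqq c_0$ and hence the exponential rate.
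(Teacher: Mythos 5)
Your proof is correct, and while it follows the same broad skeleton as the paper's (a soft step giving $I_{-1}(t)\to 0$, then Theorem \ref{Theorem1} to bootstrap to an exponential rate, then integration for $A$ and $L$), the two key intermediate steps are genuinely different. For the soft step, the paper solves the Riccati inequality $\frac{d}{dt}I_{-1}\leqq -\frac{2\pi}{A}I_{-1}^2$ after showing $\int_0^\infty \frac{dt}{A}=\infty$ (which requires the observation $\frac{d^2}{dt^2}A^2\leqq 0$, so that $A^2$ grows at most linearly); you instead get $\int_0^\infty I_0\,dt<\infty$ directly from the monotone decrease of $L^2-4\pi A$ and conclude via the monotonicity of $I_{-1}$ — arguably cleaner. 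The more substantial divergence is in how $L$ is controlled without convexity. The paper runs Gronwall on $L^2-4\pi A$ with rate $16\pi^2/L^2$ and then must show $\int_0^t \frac{dt}{L^2}$ grows linearly, which it does by integrating its own Gronwall bound to get $\int_0^\infty I_{-1}\,dt<\infty$, hence boundedness of $A$ and thus of $L$. You instead observe that $\frac{dL}{dt}=\frac{1}{L}\bigl(-I_0+\frac{4\pi^2 I_{-1}}{1-I_{-1}}\bigr)\leqq 0$ once $I_{-1}<\frac12$, by Theorem \ref{Theorem1}; this eventual monotonicity of $L$ is a structural fact not in the paper (there $L$ is shown non-increasing only in the convex case via Gage's inequality, and your use of $I_0\geqq 8\pi^2 I_{-1}$ acts as a substitute for Gage's inequality in the nearly-circular regime). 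It immediately gives the uniform lower bound on $A/L^4$ and hence exponential decay of $I_{-1}$ from its own evolution equation, with an explicit rate $64\pi^3 A(0)/L(T)^4$. The endgame for $A_\infty$ and $L_\infty$ coincides with the paper's; note that your relation $4\pi A_\infty=L_\infty^2$ is the correct form of what the theorem statement misprints as $4\pi A_\infty=L_\infty$.
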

\begin{proof}
It follows from \pref{dA/dt} that
\[
	\frac d { dt } A^2 = L^2 - 4 \pi A \geqq 0 .
\]
The second derivative is
\begin{align}
	\frac { d^2 } { dt^2 } A^2
	= & \
	\frac d { dt } ( L^2 - 4 \pi  A )
	=
	2L \frac { dL } { dt } - 4 \pi \frac { dA } { dt }
	\nonumber
	\\
	= & \
	\int_0^L \partial_t \vecf \cdot \left( - 2L \veckappa + 4 \pi \vecnu \right) ds
	=
	\int_0^L \left( \kappa - \frac L { 2A } \right) ( - 2L \tilde \kappa ) \, ds
	\nonumber
	\\
	= & \
	- 2L \int_0^L \tilde \kappa^2 ds
	\leqq 0
	.
	\label{d^2 A/dt^2}
\end{align}
Therefore
\[
	0 \leqq \frac d { dt } A^2 \leqq \left. \frac d { dt } A^2 \right|_{ t=0 } .
\]
We put $ \displaystyle{ C_0 = \left. \frac d { dt } A^2 \right|_{ t=0 } } $.
Since the initial (signed) area is positive,
so is $ A $,
and
\[
	A^2	\leqq C_0 t + A (0)^2 .
\]
Hence
\be
	\int_0^t \frac { dt } A \geqq \int_0^t \frac { dt } { \sqrt { C_0 t + A(0)^2 } } \to \infty \quad \mbox{as} \quad  t \to \infty .
	\label{intA}
\ee
It follows from \pref{d{iso.ratio}/dt} that
\[
	\frac d { dt } I_{-1}
	=
	- \frac { 8 \pi A } { L^3 } \int_0^L
	\left( \tilde \kappa + \frac { 2 \pi } L - \frac L { 2A } \right)^2 ds
	=
	- \frac { 2 \pi } A I_{-1}^2
	- \frac { 8 \pi A } { L^3 } \int_0^L \tilde \kappa^2 ds .
\]
Solving the differential inequality $ \displaystyle{ \frac d { dt } I_{-1} \leqq - \frac { 2 \pi } A I_{-1}^2 } $,
we have
\[
	0 \leqq
	I_{-1}
	\leqq
	\frac { I_{-1} (0) }
	{ \displaystyle{ 1 + 2 \pi I_{-1} (0) \int_0^t \frac { dt } A } }
	\to 0
	\quad ( t \to \infty ) .
\]
Also,
\pref{d^2 A/dt^2} and Theorem \ref{Theorem1} give us
\[
	\frac d { dt } ( L^2 - 4 \pi A )
	+
	\frac { 16 \pi^2 } { L^2 }
	( L^2 - 4 \pi A )
	\leqq
	\frac d { dt } ( L^2 - 4 \pi A )
	+
	2 L \int_0^L \tilde \kappa^2 ds
	= 0
	.
\]
Using $ I_{-1} \to 0 $ as $ t \to \infty $ and \pref{intA},
we obtain
\[
	0
	\leqq
	L^2 - 4 \pi A
	\leqq
	( L(0)^2 - 4 \pi A(0) )
	\exp \left( - 16 \pi^2 \int_0^t \frac { dt } { L^2 } \right)
	\to 0
	\quad \mbox{as} \quad t \to \infty .
\]
Dividing both sides by $ L^2 $,
we have
\[
	0 \leqq
	I_{-1}
	\leqq
	C \frac { 16 \pi^2 } { L^2 }
	\exp \left( - 16 \pi^2 \int_0^t \frac { dt } { L^2 } \right)
	=
	- C \frac d { dt }
	\exp \left( - 16 \pi^2 \int_0^t \frac { dt } { L^2 } \right) .
\]
Integrating this with respect to $ t $,
we obtain
\[
	0 \leqq
	\int_0^t I_{-1} dt
	\leqq
	C \left\{ 1 - 
	\exp \left( - 16 \pi^2 \int_0^t \frac { dt } { L^2 } \right)
	\right\}
	\leqq C .
\]
Integrating \pref{dA/dt},
we have
\[
	A = A(0) + \int_0^t \frac { L^2 } { 2A } I_{-1} dt
	\leqq
	C .
\]
Since $ A $ is non-decreasing and bounded from above,
there exists a finite limit $ \displaystyle{ A_\infty = \lim_{ t \to \infty } A \in ( 0 , \infty ) } $.
Hence $ \displaystyle{ \lim_{ t \to \infty } L =  2 \sqrt{ \pi A_\infty }   \in ( 0 , \infty ) } $ exists,
say $ L_\infty $.
Consequently,
we obtain
\begin{gather*}
	0 \leqq
	L^2 - 4 \pi A
	\leqq
	( L(0)^2 - 4 \pi A(0) )
	\exp \left( - 16 \pi^2 \int_0^t \frac { dt } { L^2 } \right)
	\leqq
	C e^{ - \lambda t }
	,
	\\
	0 \leqq I_{-1}
	= \frac { L^2 - 4 \pi A } { L^2 }
	\leqq
	C e^{ - \lambda t }
	,
	\\
	0 \leqq
	A_\infty - A
	\leqq
	\frac 1 { A_\infty + A } \int_t^\infty \frac d { dt } A^2 dt
	=
	\frac 1 { A_\infty + A } \int_t^\infty ( L^2 - 4 \pi A ) \, dt
	\leqq
	C e^{ - \lambda t }
	,
	\\
	| L - L_\infty |
	=
	\frac { | L^2 - 4 \pi A + 4 \pi ( A - A_\infty ) | } { L + L_\infty }
	\leqq
	C e^{ - \lambda t }
\end{gather*}
for some $ C > 0 $ and $ \lambda > 0 $.
\qed
\end{proof}
\par
Observe that the equation which $ \vecf $ satisfies is
\[
	\partial_t \vecf = \partial_s^2 \vecf - \frac L { 2A } R \partial_s \vecf ,
\]
where
\[
	R = \left(
	\begin{array}{rr}
	0 & -1 \\
	1 & 0
	\end{array}
	\right) .
\]
Since this is a parabolic equation with a non-local term,
$ \vecf $ is smooth for $ t > 0 $ as long as the solution exists.
Hence by shifting the initial time,
we may assume that the initial data is smooth.
Next we consider the decay of $ I_\ell $ as $ t \to \infty $ along this flow.
\begin{thm}
Let $ \vecf $ be as in Theorem \ref{Theorem4}.
For each $ \ell \in \mathbb{N} \cup \{ 0 \} $,
there exist $ C_\ell > 0 $ and $ \lambda_\ell > 0 $ such that
\[
	I_\ell (t) \leqq C_\ell e^{ - \lambda_\ell t } .
\]
\label{Theorem5}
\end{thm}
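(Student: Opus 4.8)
The plan is to combine the exponential decay of $I_{-1}$ established in Theorem~\ref{Theorem4}, the interpolation inequality of Theorem~\ref{Theorem3}, and a hierarchy of energy estimates along the flow. First I would use the remark preceding the theorem: since \pref{JiangPan} is a parabolic equation with a bounded nonlocal term, shifting the initial time lets us assume $\vecf(\cdot,t)$ is smooth for $t\geqq0$; in particular every $I_\ell(t)$ is finite and continuous in $t$. Theorem~\ref{Theorem4} gives constants $0<c_0\leqq L(t),A(t)\leqq c_1<\infty$ and $I_{-1}(t)\leqq Ce^{-\lambda t}$, so the coefficient $\frac{L}{2A}$ is bounded. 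Moreover, by \pref{dA/dt} and \pref{d^2 A/dt^2}, $\frac{d}{dt}A^2=L^2-4\pi A\geqq0$ is non-increasing, hence $\int_0^\infty 2L\int_0^L\tilde\kappa^2\,ds\,dt\leqq\frac{d}{dt}A^2\big|_{t=0}<\infty$, and the two-sided bound on $L$ yields
\[
	\int_0^\infty I_0(t)\,dt<\infty .
\]

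The key step is to show $\sup_{t\geqq0}I_\ell(t)<\infty$ for every $\ell\in\mathbb{N}\cup\{0\}$. Writing the flow as $\partial_t\vecf=V\vecnu$ with $V=\kappa-\frac{L}{2A}$ and using the standard formulas $\partial_t(ds)=-\kappa V\,ds$, $[\partial_t,\partial_s]=\kappa V\partial_s$, $\partial_t\kappa=\partial_s^2V+\kappa^2V$ (sign conventions consistent with \pref{d^2 A/dt^2}), I would differentiate $\int_0^L(\partial_s^\ell\kappa)^2\,ds$, integrate by parts, and obtain, as in \cite{DKS},
\[
	\frac{d}{dt}\int_0^L(\partial_s^\ell\kappa)^2\,ds\leqq-2\int_0^L(\partial_s^{\ell+1}\kappa)^2\,ds+P_\ell ,
\]
where $P_\ell$ is a sum of integrals of products of $s$-derivatives of $\tilde\kappa$ of order at most $\ell$, with coefficients bounded in terms of $c_0,c_1$. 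By the Gagliardo--Nirenberg inequality \pref{GN} (and, for $\ell\geqq1$, Wirtinger's inequality $I_\ell\leqq\frac{I_{\ell+1}}{4\pi^2}$), each summand of $P_\ell$ is bounded by $\int_0^L(\partial_s^{\ell+1}\kappa)^2\,ds+C\bigl(1+\int_0^L\tilde\kappa^2\,ds\bigr)^{N_\ell}$. For $\ell=0$ this already gives, after using $\partial_s\tilde\kappa=\partial_s\kappa$,
\[
	\frac{d}{dt}\int_0^L\tilde\kappa^2\,ds\leqq C\Bigl(1+\Bigl(\int_0^L\tilde\kappa^2\,ds\Bigr)^3\Bigr).
\]
Combined with $\int_0^\infty\int_0^L\tilde\kappa^2\,ds\,dt<\infty$, a blow-up exclusion argument forces $\int_0^L\tilde\kappa^2\,ds$, hence $I_0$, to be bounded: if $y\geqq0$ is $C^1$ on $[0,\infty)$ with $\frac{dy}{dt}\leqq a+by^3$ and $\int_0^\infty y\,dt<\infty$, then on $\{y\geqq1\}$ one has $\frac{d}{dt}(y^{-2})\geqq-2(a+b)$, so a value $y(\tau)\geqq\sqrt2$ propagates backwards to force $\int_{\tau-\delta}^{\tau}y\,dt\geqq\delta$ with $\delta=\frac{1}{4(a+b)}$; since $\int_{\tau-\delta}^{\tau}y\,dt\to0$, this fails for large $\tau$, and together with finiteness of $y$ on compacta we get $\sup_{t\geqq0}y(t)<\infty$.

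Once $I_0$ is bounded for all $t\geqq0$, the estimate for $\ell\geqq1$ reads, after using Wirtinger's inequality to convert the negative term into $-c\,I_\ell$ and absorbing the $L$-derivative contribution (which is $\leqq C\,I_{-1}\,I_\ell\leqq Ce^{-\lambda t}I_\ell$ because $\frac{dL}{dt}\leqq\frac{\pi L}{A}I_{-1}$ by \pref{dL/dt}),
\[
	\frac{d}{dt}I_\ell\leqq-c\,I_\ell+C ,
\]
so Gronwall's lemma gives $\sup_{t\geqq0}I_\ell(t)<\infty$ for every $\ell$ (no induction is needed, since \pref{GN} interpolates $P_\ell$ directly between $I_0$ and $I_{\ell+1}$). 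Finally, with $I_{\ell+1}(t)\leqq M_{\ell+1}$ in hand, Theorem~\ref{Theorem3} applied with $m=\ell+1$ yields
\[
	I_\ell\leqq C\Bigl(I_{-1}^{\frac12}I_{\ell+1}+I_{-1}^{\frac1{\ell+2}}I_{\ell+1}^{\frac{\ell+1}{\ell+2}}\Bigr)\leqq C_\ell\,e^{-\lambda_\ell t},\qquad\lambda_\ell=\frac{\lambda}{\ell+2},
\]
which is the assertion (and for $\ell=0$ reproves the exponential decay of $I_0$).

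I expect the main obstacle to be the base case $\ell=0$ of the energy estimate: the cubic term $\int_0^L\kappa^3\tilde\kappa\,ds$ (equivalently $\int_0^L\tilde\kappa^4\,ds$ after expanding) is supercritical, so the differential inequality for $\int_0^L\tilde\kappa^2\,ds$ does not close by itself, and one is forced to feed in the time-integrability $\int_0^\infty I_0\,dt<\infty$, which ultimately comes from the convexity in $t$ of $A^2$ along the flow. The other ingredients --- the higher-order energy identities and the Gagliardo--Nirenberg/Young bookkeeping needed to absorb $P_\ell$ --- are standard (cf.\ \cite{DKS}) though somewhat lengthy.
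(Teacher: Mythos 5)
Your proposal is correct and follows essentially the same route as the paper's proof: a cubic differential inequality for $I_0$ closed by the time-integrability $\int_0^\infty I_0\,dt<\infty$ (coming from $\frac{d^2}{dt^2}A^2\leqq 0$), followed by higher-order energy estimates in which every error term is interpolated between $I_0$ and $I_{\ell+1}$ via Gagliardo--Nirenberg and absorbed, with no induction on $\ell$ beyond the boundedness of $I_0$. The only (harmless, arguably cleaner) differences are in execution: you exclude blow-up of $I_0$ by a backward-propagation ODE lemma, whereas the paper runs a forward continuation argument from a time $T_2$ with $I_0(T_2)<\frac{1}{2C_2}$, using that the $L^{-2}I_0^2$ term on the left absorbs $C_2L^{-2}I_0^3$ while $I_0\leqq 1/C_2$; and you obtain the final exponential decay of $I_\ell$ from the uniform bound on $I_{\ell+1}$ combined with Theorem \ref{Theorem3} and the decay of $I_{-1}$, whereas the paper derives for each $\ell$ a linear inequality $\frac{d}{dt}I_\ell+cL^{-2}I_\ell\leqq Ce^{-\mu t}$ whose forcing decays because the exponential decay of $I_0$ has already been established.
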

\begin{proof}
We initially consider the behavior of $ I_0 $.
Since
\[
	I_0
	=
	L \int_0^L \left( \kappa - \frac { 2 \pi } L \right)^2 ds
	=
	L \int_0^L \kappa^2 ds - 4 \pi^2
	,
\]
we have
\begin{align*}
	\frac d { dt } I_0
	= & \
	\frac { dL } { dt } \int_0^L \kappa^2 ds
	+
	L \frac d { dt } \int_0^L \kappa^2 ds
	\\
	= & \
	\int_0^L
	\left\{ - \veckappa \int_0^L \kappa^2 ds
	+
	L \left( 2 \nabla_s^2 \veckappa + \| \veckappa \|_{ \mathbb{R}^2 }^2 \veckappa \right)
	\right\}
	\cdot
	\partial_t \vecf
	\, ds
	\\
	= & \
	\int_0^L
	\left\{ - \kappa \int_0^L \kappa^2 ds
	+
	L \left( 2 \partial_s^2 \kappa + \kappa^3 \right)
	\right\}
	\left( \kappa - \frac L { 2A } \right)
	ds
	\\
	= & \
	- \left( \int_0^L \kappa^2 ds \right)^2
	+
	\frac { \pi L } A \int_0^L \kappa^2 ds
	+
	L \int_0^L
	\left\{
	- 2 ( \partial_s \kappa )^2
	+ \kappa^4
	- \frac L { 2A } \kappa^3
	\right\} ds
	\\
	= & \
	- 2 L \int_0^L ( \partial_s \tilde \kappa )^2 ds
	+
	L \left\{ \int_0^L \kappa^4 ds - \frac 1L \left( \int_0^L \kappa^2 ds \right)^2 \right\}
	-
	\frac { L^2 } { 2A } \int_0^L \kappa^2 \tilde \kappa \, ds
	\\
	= & \
	- 2 L \int_0^L ( \partial_s \tilde \kappa )^2 ds
	- \left( \int_0^L \tilde \kappa^2 ds \right)^2
	\\
	& \quad
	+ \,
	\int_0^L
	\left\{ L \tilde \kappa^4
	+ \left( 8 \pi - \frac { L^2 } { 2A } \right) \tilde \kappa^3
	+ \left( \frac { 16 \pi^2 } L - \frac { 2 \pi L } A \right) \tilde \kappa^2
	\right\}
	ds
	.
\end{align*}
By virtue of Gagliardo-Nirenberg's inequality,
we have
\begin{align}
	\frac d { dt } I_0 + \frac 1 { L^2 } I_0^2 + \frac 2 { L^2 } I_1
	\leqq & \
	\frac C { L^2 }
	\int_0^L \left(
	L^3 \tilde \kappa^4
	+
	L^2 | \tilde \kappa |^3
	+
	L \tilde \kappa^2
	\right) ds
	\nonumber
	\\
	\leqq & \
	\frac C { L^2 }
	\left(
	I_1^{ \frac 12 } I_0^{ \frac 32 }
	+
	I_1^{ \frac 14 } I_0^{ \frac 54 }
	+
	I_0
	\right)
	\label{dI_0/dt}
	.
\end{align}
Applying Young's and Wirtinger's inequalities,
Theorem \ref{Theorem1}
and Theorem \ref{Theorem3},
we have
\begin{align*}
	I_1^{ \frac 12 } I_0^{ \frac 32 }
	\leqq & \
	\epsilon I_1 + C_\epsilon I_0^3 ,
	\\
	I_1^{ \frac 14 } I_0^{ \frac 54 }
	\leqq & \
	\epsilon I_1 + C_\epsilon I_0^{ \frac 53 }
	\leqq
	\epsilon ( I_1 + I_0 ) + C_\epsilon I_0^3
	\leqq
	C \epsilon I_1 + C_\epsilon I_0^3
	,
	\\
	I_0
	\leqq & \
	I_{-1}^{ \frac 12 } \left( I_1 + I_1^{ \frac 12 } \right)
	\leqq
	\left( I_{-1}^{ \frac 12 } + \epsilon \right) I_1 + C_\epsilon I_{-1}
	\\
	\leqq & \
	\left( I_{-1}^{ \frac 12 } + \epsilon \right) I_1 + C_\epsilon e^{ - \lambda t }
\end{align*}
for any $ \epsilon > 0 $.
By Theorem \ref{Theorem4},
for each $ \epsilon > 0 $
there exists $ T_0 > 0 $ such that
$ I_{-1} \leqq \epsilon $ for $ t \geqq T_0 $.
Therefore taking $ \epsilon $ sufficiently small,
we find that there exist some constants $ C_1 $,
$ C_2 $,
and $ C_3 $ such that
\be
	\frac d { dt } I_0 + \frac 1 { L^2 } I_0^2 + \frac { C_1 } { L^2 } I_1
	\leqq
	\frac { C_2 } { L^2 } I_0^3 + \frac { C_3 } { L^2 } e^{ - \lambda t }
	\label{dI_0/dt--2}
\ee
for $ t \geqq T_0 $.
It is obvious that there exists $ T_1 \geqq T_0 $ satisfying
\[
	\int_{ T_1 }^\infty \frac { C_3 } { L^2 } e^{ - \lambda t } dt < \frac 1 { 2 C_2 } .
\]
Furthermore, there exists $ T_2 \geqq T_1 $ such that $ \displaystyle{ I_0 ( T_2 ) < \frac 1 { 2 C_2 } } $,
because we have
\[
	\int_0^\infty I_0 dt \leqq C
\]
by integrating \pref{d^2 A/dt^2}.
We would like to show $ \displaystyle{ I_0 ( t ) < \frac 1 { C_2 } } $ for $ t \geqq T_2 $.
To do this,
we argue by contradiction.
Then there exists $ T_3 > T_2 $ such that
\[
	I_0 ( t ) < \frac 1 { C_2 } \mbox{ for $ t \in [ T_2 , T_3 ) $,
	and } I_0 ( T_3 ) = \frac 1 { C_2 } .
\]
It follow from \pref{dI_0/dt--2} that
\[
	\frac d { dt } { I_0 }
	\leqq
	\frac { C_3 } { L^2 } e^{ - \lambda t }
\]
for $ t \in [ T_2 , T_3 ] $.
Hence
\[
	I_0 ( T_3 )
	=
	I_0 ( T_2 )
	+
	\int_{ T_2 }^{ T_3 } \frac d { dt } I_0 dt
	<
	\frac 1 { 2 C_2 }
	+
	\int_{ T_1 }^\infty \frac { C_3 } { L^2 } e^{ - \lambda t } dt
	<
	\frac 1 { C_2 } .
\]
This contradicts $ \displaystyle{ I_0 ( T_3 ) = \frac 1 { C_2 } } $.
Consequently $ I_0 $ is uniformly bounded,
and \pref{dI_0/dt} implies
\[
	\frac d { dt } I_0
	+
	\frac 1 { L^2 } I_0^2 + \frac 2 { L^2 } I_1
	\leqq
	\frac C { L^2 } \left( I_1^{ \frac 12 } I_0^{ \frac 12 } + I_1^{ \frac 14 } I_0^{ \frac 34 } + I_0 \right) .
\]
By Theorem \ref{Theorem3} we have
\[
	I_1^{ \frac 12 } I_0^{ \frac 12 } + I_1^{ \frac 14 } I_0^{ \frac 34 } + I_0
	\leqq
	\epsilon I_1 + C_\epsilon I_0
	\leqq
	\epsilon I_1 + C_\epsilon I_{-1}^{ \frac 12 } \left( I_1 + I_1^{ \frac 12 } \right)
	\leqq
	\left( 2\epsilon + C_\epsilon I_{-1}^{ \frac 12 } \right) I_1
	+
	C_\epsilon I_{-1}
	,
\]
where $ \epsilon $ is an arbitrary positive number.
Taking $ \epsilon $ sufficiently small,
and $ T_0 $ larger if necessary,
we have $ \epsilon + C_\epsilon I_{-1}^{ \frac 12 } < 1 $ for $ t \geqq T_0 $.
With help of Theorem \ref{Theorem1} and Theorem \ref{Theorem4} we find
\[
	\frac d { dt } I_0 + C_4 \frac { I_0 } { L^2 } \leqq \frac { C_5 } { L^2 } I_{-1}
	\leqq
	C_6 e^{ - \lambda t }
\]
for large $ t $.
Thus the assertion for $ \ell = 0 $ with some positive $ \lambda_0 $ has been proved.
\par
Next we show the exponential decay of $ I_\ell $ for $ \ell \in \mathbb{N} $.
Set
\[
	J_{k,p}
	=
	\left\{ L^{ ( 1 + k ) p - 1 } \int_0^L | \partial_s^k \tilde \kappa |^p ds \right\}^{ \frac 1p }
	.
\]
By Gagliardo-Nirenberg's inequality we have
\be
	J_{ k,p } \leqq C J_{m,2}^\theta J_{0,2}^{ 1 - \theta }
	=
	C I_m^{ \frac \theta 2 } I_0^{ \frac { 1 - \theta } 2 }
	\label{J}
\ee
for $ k \in \{ 0 , 1 , \cdots , m \} $,
$ p \geqq 2 $.
Here $ C $ is independent of $ L $,
and $ \theta =  \frac 1m  \left( k - \frac 1p + \frac 12 \right)  \in [ 0,1 ] $.
\par
Now observe that
\begin{align*}
	\frac d { dt } I_\ell
	= & \
	( 2 \ell + 1 ) L^{ 2 \ell } \frac { dL } { dt } \int_0^L ( \partial_s^\ell \tilde \kappa )^2 ds
	+
	L^{ 2 \ell + 1 } \int_0^L ( \partial_s^\ell \tilde \kappa )^2 \partial_t ( ds )
	\\
	& \
	+ \,
	2 L^{ 2 \ell + 1 } \int_0^L ( \partial_s^\ell \tilde \kappa ) ( \partial_t \partial_s^\ell \tilde \kappa ) \, ds
	.
\end{align*}
It follows from \pref{dL/dt} and Theorem \ref{Theorem4} that
\[
	( 2 \ell + 1 ) L^{ 2 \ell } \frac { dL } { dt } \int_0^L ( \partial_s^\ell \tilde \kappa )^2 ds
	=
	( 2 \ell + 1 ) L^{ 2 \ell } \left( - \int_0^L \kappa^2 ds + \frac { \pi L } A \right)
	\int_0^L ( \partial_s^\ell \tilde \kappa )^2 ds
	\leqq
	\frac C { L^2 } I_\ell .
\]
Since $ \partial_t ( ds ) = - \partial_t \vecf \cdot \veckappa \, ds $,
we have
\begin{align}
	L^{ 2 \ell + 1 } \int_0^L ( \partial_s^\ell \tilde \kappa )^2 \partial_t ( ds )
	= & \
	L^{ 2 \ell + 1 } \int_0^L ( \partial_s^\ell \tilde \kappa )^2 \left\{ - \kappa \left( \kappa - \frac L { 2A } \right) \right\} ds
	\nonumber
	\\
	\leqq & \
	\frac { L^{ 2 \ell + 2 } } { 2A } \int_0^L ( \partial_s^\ell \tilde \kappa )^2
	\left( \tilde \kappa + \frac { 2 \pi } L \right) ds
	\nonumber
	\\
	\leqq & \
	\frac { L^{ 2 \ell + 2 } } { 2A } \int_0^L ( \partial_s^\ell \tilde \kappa )^2 \tilde \kappa \, ds
	+
	\frac C { L^2 } I_\ell
	\label{II}
	.
\end{align}
For $ k \in \mathbb{N} \cup \{ 0 \} $ and $ m \in \mathbb{N} $,
let $ P_m^k ( \tilde \kappa ) $ be any linear combination of the type
\[
	P_m^k ( \tilde \kappa )
	=
	\sum_{ i_1 + \cdots + i_m = k } c_{ i_1 , \dots , i_m }
	\partial_s^{ i_1 } \tilde \kappa \cdots \partial_s^{ i_m } \tilde \kappa
\]
with universal,
constant coefficients $ c_{ i_1 , \dots , i_m } $.
Similarly we define $ P_0^k $ as a universal constant.
We can show
\[
	\partial_t \partial_s^k \tilde \kappa
	=
	\partial_s^{ k+2 } \tilde \kappa
	+
	\sum_{ m=0 }^3 L^{ - ( 3 - m ) } P_m^k ( \tilde \kappa )
	+
	\frac LA \sum_{ m=0 }^2 L^{ - ( 2 - m ) } P_m^k ( \tilde \kappa )
\]
by induction on $ k $. 
Therefore we have
\begin{align*}
	&
	2 L^{ 2 \ell + 1 } \int_0^L ( \partial_s^\ell \tilde \kappa ) ( \partial_t \partial_s^\ell \tilde \kappa ) \, ds
	\\
	& \quad
	=
	- \frac 2 { L^2 } I_{ \ell + 1 }
	+
	L^{ 2 \ell + 1 }
	\int_0^L
	( \partial_s^\ell \tilde \kappa )
	\left( \sum_{ m=0 }^3 L^{ - ( 3 - m ) } P_m^\ell ( \tilde \kappa )
	+ \frac LA \sum_{ m=0 }^2 L^{ - ( 2 - m ) } P_m^\ell ( \tilde \kappa )
	\right) ds .
\end{align*}
Since $ P_0^\ell ( \tilde \kappa ) $ is a constant,
\[
	\int_0^L ( \partial_s^\ell \tilde \kappa ) P_0^\ell ( \tilde \kappa ) \, ds = 0 .
\]
Since $ P_1^\ell ( \tilde \kappa ) = c \partial_s^\ell \tilde \kappa $, we have
\begin{align*}
	L^{ 2 \ell + 1 } \left|
	\int_0^L ( \partial_s^\ell \tilde \kappa ) L^{-2} P_1^\ell ( \tilde \kappa ) \, ds \right|
	= & \
	\frac { | c | } { L^2 } I_\ell ,
	\\
	L^{ 2 \ell + 1 } \left|
	\int_0^L ( \partial_s^\ell \tilde \kappa ) \frac LA L^{-1} P_1^\ell ( \tilde \kappa ) \, ds \right|
	\leqq & \
	\frac C { L^2 } I_\ell .
\end{align*}
Hence we obtain
\begin{align}
	&
	\frac d { dt } I_\ell + \frac 2 { L^2 } I_{ \ell + 1 }
	\nonumber
	\\
	& \quad
	\leqq
	\frac C { L^2 } I_\ell
	+
	L^{ 2 \ell + 1 }
	\int_0^L
	( \partial_s^\ell \tilde \kappa )
	\left(
	\sum_{ m=2 }^3 L^{ - ( 3 - m ) } P_m^\ell ( \tilde \kappa )
	+
	\frac LA P_2^\ell ( \tilde \kappa )
	\right)
	ds
	.
	\label{dI_ell/dt}
\end{align}
Here the first term on the last line of \pref{II} is included into $ \displaystyle{ L^{ 2 \ell + 1 }
	\int_0^L
	( \partial_s^\ell \tilde \kappa )
	\frac LA P_2^\ell ( \tilde \kappa ) \,
	ds } $.
\par
Now we estimate each term on the right-hand side of \pref{dI_ell/dt}.
Firstly we have from Theorem \ref{Theorem3}
\[
	\frac C { L^2 } I_\ell
	\leqq
	\frac C { L^2 } \left( I_{-1}^{ \frac 12 } I_{ \ell + 1 } + I_{-1}^{ \frac 1 { \ell + 2 } } I_{ \ell + 1 }^{ \frac {\ell + 1 } { \ell + 2 } }\right)
	\leqq
	\frac C { L^2 }
	\left\{ \left( I_{-1}^{ \frac 12 } + \epsilon \right) I_{ \ell + 1 } + C_\epsilon I_{-1} \right\}
\]
for any $ \epsilon > 0$.
Taking $ \epsilon $ small and $ t $ large,
by Theorem \ref{Theorem4},
the term
$\displaystyle  \frac C { L^2 }
	\left\{ \left( I_{-1}^{ \frac 12 } + \epsilon \right) I_{ \ell + 1 } \right\} $
can be absorbed into the left-hand side of \pref{dI_ell/dt}.
The second term is a function decaying exponentially.
We will estimate the integral of $ L^{ 2 \ell + m - 2 } ( \partial_s^\ell \tilde \kappa ) P_m^\ell ( \tilde \kappa ) $ for $ m = 2 $ and $ 3 $.
\par
We first consider the case $ m = 2 $.
$ P_2^\ell ( \tilde \kappa ) $ is a linear combination of $ ( \partial_s^k \tilde \kappa ) ( \partial_s^{ \ell - k } \tilde \kappa ) $ with $ k = 0 $,
$ \cdots $,
$ \ell $.
By H\"{o}lder's inequality,
we have
\[
	\left| L^{ 2 \ell + 1 } \int_0^L
	( \partial_s^\ell \tilde \kappa ) L^{-1} P_2^\ell ( \tilde \kappa ) \, ds
	\right|
	\leqq
%	C L^{ 2 \ell }
%	\| \partial_s^\ell \tilde \kappa \|_{ L^3 }
%	\| \partial_s^k \tilde \kappa \|_{ L^3 }
%	\| \partial_s^{ \ell - k } \tilde \kappa \|_{ L^3 }
%	=
	\sum_{k=0}^\ell \frac C { L^2 } J_{ \ell , 3 } J_{ k,3 } J_{ \ell - k , 3 }
	,
\]
and \pref{J} yields
\[
	J_{ j , 3 } \leqq C I_{ \ell + 1 }^{ \frac { \theta (j,3) } 2 } I_0^{ \frac { 1 - \theta (j,3) } 2 }
	,
	\quad
	\theta ( j,3 ) = \frac { j + \frac 16 } { \ell + 1 } .
\]
Hence applying Young's inequality,
we obtain
\[
	\left| L^{ 2 \ell + 1 } \int_0^L
	( \partial_s^\ell \tilde \kappa ) L^{-1} P_2^\ell ( \tilde \kappa ) \, ds
	\right|
	\leqq
	\frac C { L^2 } I_{ \ell + 1 }^{ \frac { 2 \ell + \frac 12 } { 2 ( \ell + 1 ) } }
	I_0^{ \frac { \ell + \frac 52 } { 2 ( \ell + 1 ) } }
	\leqq
	\frac \epsilon { L^2 } I_{ \ell + 1 }
	+
	\frac { C_\epsilon } { L^2 } I_0^{ \frac { 2 \ell + 5 } 3 }
\]
for any $ \epsilon > 0$.
$ \displaystyle \frac \epsilon { L^2 } I_{ \ell + 1 } $
can be absorbed into the left-hand side of \pref{dI_ell/dt},
and the second one is a function decaying exponentially.
\par
We can estimate for the case $ m = 3 $ similarly.
Indeed,
Since $ P_3^\ell ( \tilde \kappa ) $ is a linear combination of $ \partial_s^j \tilde \kappa $,
$ \partial_s^k \tilde \kappa $,
and $ \partial_s^{ \ell - j - k } \tilde \kappa $,
we have
\begin{align*}
	\left| L^{ 2 \ell + 1 } \int_0^L
	( \partial_s^\ell \tilde \kappa ) P_3^\ell ( \tilde \kappa ) \, ds
	\right|
	&\leqq
%	C L^{ 2 \ell + 1 }
%	\| \partial_s^\ell \tilde \kappa \|_{ L^4 }
%	\| \partial_s^j \tilde \kappa \|_{ L^4 }
%	\| \partial_s^k \tilde \kappa \|_{ L^4 }
%	\| \partial_s^{ \ell - j - k } \tilde \kappa \|_{ L^4 }
%	=
	\sum_{m=0}^\ell \sum_{\substack{j+k=m\\ j\geqq 0,k\geqq 0}} \frac C { L^2 } J_{ \ell , 4 } J_{ j,4 } J_{ k,4 } J_{ \ell - j - k , 4 }\\
	&\leqq
	\frac C { L^2 } I_{ \ell + 1 }^{ \frac { 2 \ell + 1 } { 2 ( \ell + 1 ) } } I_0^{ \frac { 2 \ell + 3 } { 2 ( \ell + 1 ) } }
	\leqq
	\frac \epsilon { L^2 } I_{ \ell + 1 }
	+
	\frac { C_\epsilon } { L^2 } I_0^{ 2 \ell + 3 }
\end{align*}
for any $ \epsilon > 0$.
\par
Since $ \displaystyle{ \frac { L^2 } A } $ is uniformly bounded by Theorem \ref{Theorem4},
we have
\[
	\left|
	L^{ 2 \ell + 1 }
	\int_0^L 
	( \partial_s^\ell \tilde \kappa )
	\frac LA
	P_2^\ell ( \tilde \kappa ) \, ds
	\right|
	\leqq C
	\left|
	L^{ 2 \ell + 1 }
	\int_0^L 
	( \partial_s^\ell \tilde \kappa )
	L^{-1}
	P_2^\ell ( \tilde \kappa ) \, ds
	\right|
	\leqq
	\frac \epsilon { L^2 } I_{ \ell + 1 }
	+
	\frac { C_\epsilon } { L^2 } I_0^{ \frac { 2 \ell + 5 } 3 }.
\]
\par
Therefore \pref{dI_ell/dt} and Wirtinger's inequality imply
\[
	\frac d { dt } I_\ell + \frac 1 { C L^2 } I_\ell
	\leqq
	C e^{ - \mu t } ,
\]
which shows the exponential decay of $ I_\ell $.
\qed
\end{proof}
\begin{thm}
Let $ \vecf $ be as in Theorem \ref{Theorem4},
and let $ \displaystyle{ f(s,t) = \sum_{ k \in \mathbb{Z} } \hat f(k) (t) \varphi_k (s) } $ be the Fourier expansion for any fixed $ t > 0 $.
Set
\[
	\vecc (t) = \frac 1 { \sqrt { L(t) } }( \Re \hat f(0) (t) , \Im \hat f(0) (t) ) ,
\]
and define $ r(t) \geqq 0 $ and $ \sigma (t) \in  \mathbb{R} / 2 \pi \mathbb{Z}  $ by
\[	
	\hat f(1) (t) = \sqrt{ L(t) } r(t) \exp \left( i \frac { 2 \pi \sigma (t) } { L(t) } \right) .
\]
Furthermore we set
\[
	\tilde { \vecf } ( \theta , t )
	=
	\vecf ( L(t) \theta - \sigma (t) , t ) ,
	\quad \mbox{for} \quad
	( \theta , t ) \in \mathbb{R} / \mathbb{Z} \times [ 0 , \infty )
	.
\]
Then the following claims hold.
\begin{itemize}
\item[{\rm (1)}]
There exists $ \vecc_\infty \in \mathbb{R}^2 $ such that
\[
	\| \vecc (t) - \vecc_\infty \| \leqq C e^{ - \gamma t } .
\]
\item[{\rm (2)}]
The function $ r(t) $ converges to the constant $ \displaystyle{ \frac { L_\infty } { 2 \pi } } $ exponentially as $ t \to \infty $:
\[
	\left| r(t) - \frac { L_\infty } { 2 \pi } \right|
	\leqq
	C e^{ - \gamma t } .
\]
\item[{\rm (3)}]
There exists $ \sigma_\infty \in \mathbb{R} / 2 \pi \mathbb{Z} $ such that
\[
	| \sigma (t) - \sigma_\infty | \leqq C e^{ - \gamma t } .
\]
\item[{\rm (4)}]
For any $ k \in \{ 0 \} \cup \mathbb{N} $ there exist $ C_k > 0 $ and $ \gamma_k > 0 $ such that 
\[
	\| \tilde { \vecf } ( \cdot , t ) - \tilde { \vecf }_\infty \|_{ C^k ( \mathbb{R} / \mathbb{Z} ) }
	\leqq
	C_k e^{ - \gamma_k t }
	,
\]
where
\[
	\tilde { \vecf }_\infty ( \theta )
	=
	\vecc_\infty + \frac { L_\infty } { 2 \pi } ( \cos 2 \pi \theta , \sin 2 \pi \theta ) .
\]
\item[{\rm (5)}]
For sufficiently large $ t $,
$ \mathrm{Im} \tilde { \vecf }( \cdot , t ) $ is the boundary of a bounded domain $ \Omega (t) $.
Furthermore, there exists $ T_\ast \geqq 0 $ such that $ \Omega (t) $ is strictly convex for $ t \geqq T_\ast $.
\item[{\rm (6)}]
Let $ D_{ r_\infty } ( \vecc_\infty ) $ be the closed disk with center $ \vecc_\infty $ and radius $ r_\infty $.
Then we have 
\[
	d_H ( \overline { \Omega(t) } , D_{ r_\infty } ( \vecc_\infty ) )
	\leqq
	C e^{ - \gamma t } ,
\]
where $ d_H $ is the Hausdorff distance.
\item[{\rm (7)}]
Let $\displaystyle 
	\vecb (t) =
	\frac 1 { A(t) } \iint_{ \Omega (t) } \vecx \, d \vecx$
be the barycenter of $\Omega(t)$. Then we have
\[
\|A(t) ( \vecb (t) - \vecc (t) )\| \leqq C e^{ - \gamma t }.
\]
\end{itemize}
\label{Theorem6}
\end{thm}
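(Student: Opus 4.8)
The plan is to reduce the whole theorem to two facts, both drawn from Theorems \ref{Theorem4} and \ref{Theorem5}: (i) the ``tail'' Fourier modes of $\vecf$ decay exponentially in every Sobolev norm, i.e.\ for each $N\in\mathbb N$ there are $C_N,\mu_N>0$ with $\sum_{k\neq 0,1}(1+|k|)^N|\hat f(k)(t)|^2\leqq C_N e^{-\mu_N t}$; and (ii) the principal quantities $\vecc(t)$, $r(t)$, $\sigma(t)$ converge exponentially, which are exactly claims (1)--(3). Granting (i)--(ii), the remaining claims are soft. For (4) one expands $\tilde{\vecf}(\cdot,t)$ and $\tilde{\vecf}_\infty$ in Fourier series on $\mathbb R/\mathbb Z$: one checks $\widehat{\tilde{\vecf}}(0)=\vecc(t)$, $\widehat{\tilde{\vecf}}(1)=r(t)$, and $\widehat{\tilde{\vecf}}(m)=L(t)^{-1/2}e^{-2\pi i m\sigma(t)/L(t)}\hat f(m)(t)$ for $m\neq 0,1$, while $\tilde{\vecf}_\infty$ has Fourier coefficients $\vecc_\infty$, $L_\infty/(2\pi)$ and $0$; hence $\|\tilde{\vecf}(\cdot,t)-\tilde{\vecf}_\infty\|_{H^{k+1}(\mathbb R/\mathbb Z)}^2\leqq C_k\bigl(\|\vecc(t)-\vecc_\infty\|^2+|r(t)-L_\infty/(2\pi)|^2+e^{-\mu_k t}\bigr)$ and the Sobolev embedding $H^{k+1}\hookrightarrow C^k$ gives (4). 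Claim (5) follows since a $C^2$-small perturbation of the round circle $\mathrm{Im}\,\tilde{\vecf}_\infty$ is embedded (tubular neighbourhood) and has everywhere positive curvature $\kappa=2\pi/L+\tilde\kappa>0$ once $\|\tilde\kappa\|_{L^\infty}$ is small, hence bounds a strictly convex domain for $t\geqq T_\ast$. Claim (6) follows from (4) with $k=0$ together with the elementary fact that the Hausdorff distance of two convex bodies is at most that of their boundaries (compare support functions). Claim (7) follows because $A(t)\to A_\infty$ (Theorem \ref{Theorem4}), $\vecc(t)\to\vecc_\infty$ by (1), and $\vecb(t)$ is a locally Lipschitz functional of $\tilde{\vecf}(\cdot,t)$ in $C^1$, hence $\vecb(t)\to\vecc_\infty$ exponentially by (4), so $\|A(t)(\vecb(t)-\vecc(t))\|\leqq A(t)\|\vecb(t)-\vecc(t)\|\leqq Ce^{-\gamma t}$; alternatively one may use Green's theorem to write $A\vecb=-\tfrac13\int_0^L(\vecf\cdot\vecnu)\vecf\,ds$ and estimate $A(\vecb-\vecc)$ directly, noting that it vanishes when $\mathrm{Im}\,\vecf$ is a circle.

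For (i), recall $f''=i\kappa f'$ with $f=f_1+if_2$, so that $f''-\tfrac{2\pi i}{L}f'=i\tilde\kappa f'$ and, by induction, $\partial_s^{\,j}(i\tilde\kappa f')$ equals a universal polynomial in $\tilde\kappa,\tilde\kappa',\dots$ and $1/L$ times $f'$ (exactly of the type $P_m^k(\tilde\kappa)$ handled in the proof of Theorem \ref{Theorem5}). Its $L^2$-norm is therefore controlled, by the Gagliardo--Nirenberg inequalities used in the proof of Theorem \ref{Theorem3}, the exponential decay of all $I_\ell$ from Theorem \ref{Theorem5}, and the boundedness (above and below) of $L$ from Theorem \ref{Theorem4}, by $C_j e^{-\mu_j t}$. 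On the Fourier side this norm equals $(2\pi/L)^{2j+4}\sum_{k}k^{2j+2}(k-1)^2|\hat f(k)|^2$, and since $k^{2j+2}(k-1)^2\geqq c_j(1+|k|)^N$ for all $k\neq 0,1$ once $j$ is large, (i) follows. Claim (2) is then algebraic: by \pref{ser2}, $L(t)r(t)^2=|\hat f(1)|^2=\tfrac{L^3}{4\pi^2}-\sum_{k\neq 0,1}k^2|\hat f(k)|^2$, so $r(t)^2\to L_\infty^2/(4\pi^2)$ exponentially and, $r$ being bounded away from $0$ eventually, $r(t)\to L_\infty/(2\pi)=:r_\infty$ exponentially, consistently with $A_\infty=\pi r_\infty^2$.

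For (1) and (3) I would differentiate the principal Fourier coefficients along the flow, using that its normal speed is $\kappa-\tfrac{L}{2A}$ and $\partial_t(ds)=-\kappa(\kappa-\tfrac{L}{2A})\,ds$. For $\vecc=\tfrac1L\int_0^L\vecf\,ds$ a direct computation, in which the zeroth-order terms cancel (using $\int_0^L\veckappa\,ds=\int_0^L\vecnu\,ds=0$ and $\int_0^L\kappa(\kappa-\tfrac{L}{2A})ds=-L'$ from \pref{dL/dt}), gives $\dfrac{d\vecc}{dt}=-\dfrac1L\int_0^L(\vecf-\vecc)\,\kappa\bigl(\kappa-\tfrac{L}{2A}\bigr)ds$. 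Here $\|\vecf-\vecc\|_{L^\infty}$ is bounded (since $\|\vecf-\vecc\|_{L^2}^2\leqq\sum_{k\neq0}k^2|\hat f(k)|^2=L^3/(4\pi^2)$ and $\|\partial_s(\vecf-\vecc)\|_{L^2}^2=L$ are bounded), $\|\kappa\|_{L^\infty}$ is bounded (as $\|\tilde\kappa\|_{L^\infty}\to0$ by Gagliardo--Nirenberg and Theorem \ref{Theorem5}), and $\|\kappa-\tfrac{L}{2A}\|_{L^1}=\|\tilde\kappa-\tfrac{L}{2A}I_{-1}\|_{L^1}\leqq I_0^{1/2}+\tfrac{L^2}{2A}I_{-1}\leqq Ce^{-\gamma t}$; hence $\|d\vecc/dt\|\leqq Ce^{-\gamma t}$ and $\vecc(t)$ converges exponentially. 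For $\hat f(1)$, passing to the rescaled parameter $\theta=s/L$ and integrating the normal velocity against $e^{-2\pi i\theta}$, the curvature terms combine so that $\dfrac{d}{dt}\hat f(1)=\bigl(\tfrac{\pi}{A}-\tfrac{4\pi^2}{L^2}\bigr)\hat f(1)+O(e^{-\gamma t})=\tfrac{\pi}{A}I_{-1}\,\hat f(1)+O(e^{-\gamma t})$; since $\hat f(1)$ is bounded and $I_{-1}\to0$ exponentially, $\hat f(1)(t)$ converges exponentially to some $\hat f(1)_\infty$ with $|\hat f(1)_\infty|=\sqrt{L_\infty}\,L_\infty/(2\pi)$, whence $\sigma(t)=\tfrac{L(t)}{2\pi}\arg\hat f(1)(t)\to\tfrac{L_\infty}{2\pi}\arg\hat f(1)_\infty=:\sigma_\infty$ exponentially, using that $|\hat f(1)|$ is bounded below for large $t$.

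The hard part will be the bookkeeping behind the ODE for $\hat f(1)$ in claim (3): one must treat carefully the time-dependent arc-length reparametrisation (equivalently the tangential velocity forced by keeping the parameter on $\mathbb R/L(t)\mathbb Z$), isolate the two clean contributions $\pi A^{-1}\hat f(1)$ (from the non-local term) and $-4\pi^2L^{-2}\hat f(1)$ (from $\veckappa$), and verify that every remaining term is $O(e^{-\gamma t})$ using the exponential decay of $I_0$, $I_{-1}$ and of $L'$ (note $L'=-I_0/L+\pi L A^{-1}I_{-1}$ by \pref{dL/dt}). The companion technical point is the passage in (i) from the decay of the scale-invariant $I_\ell$ to that of $\sum_{k\neq0,1}(1+|k|)^N|\hat f(k)|^2$, uniformly in $t$; everything else — the Sobolev-series assembly in (4), the perturbation arguments in (5)--(6), and the functional-continuity argument in (7) — is routine.
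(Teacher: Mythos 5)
Your architecture coincides with the paper's (exponential decay of the tail Fourier modes, then separate control of $\vecc$, $r$, $\sigma$, then soft assembly of (4)--(7)), and most of your sub-arguments are valid, though several take genuinely different routes. For the tail decay you use $f''-\tfrac{2\pi i}{L}f'=i\tilde\kappa f'$ and estimate $\|\partial_s^j(i\tilde\kappa f')\|_{L^2}^2=(2\pi/L)^{2j+4}\sum_k k^{2j+2}(k-1)^2|\hat f(k)|^2$; the paper instead uses the identity $\sum_k k^\ell(k-1)|\hat f(k)|^2=-i^{-\ell}(L/2\pi)^{\ell+1}\int_0^L\tilde\kappa F_\ell\,ds$ (for odd $\ell$, so that the summand is non-negative). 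Your version is arguably cleaner since the weights $k^{2j+2}(k-1)^2$ are manifestly non-negative. Your algebraic derivation of (2) from \pref{ser2}, $L\,r^2=\tfrac{L^3}{4\pi^2}-\sum_{k\neq0,1}k^2|\hat f(k)|^2$, is a nice shortcut compared with the paper's argument via $|\partial_s\vecf|\equiv1$. Your treatment of (1) is essentially the paper's, and your soft continuity argument for (7) is a legitimate alternative to the paper's explicit computation $A(b-c)=-\tfrac{i}{2}\int_0^L|f-c|^2\partial_s f\,ds$, in which the $r^2$ term cancels.

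The one place where you have a plan rather than a proof is claim (3): you propose to derive a precise ODE for $\hat f(1)$, isolate the contributions $\tfrac{\pi}{A}\hat f(1)$ and $-\tfrac{4\pi^2}{L^2}\hat f(1)$, and check that all remaining terms (including those coming from the moving domain $\mathbb{R}/L(t)\mathbb{Z}$ and the induced tangential reparametrisation) decay exponentially — and you yourself flag this bookkeeping as ``the hard part.'' As written this is a genuine gap, but it can be closed without any of that work: the paper passes to $\bar f(u,t)=f(L(t)u/(2\pi),t)$ on the fixed circle $\mathbb{R}/2\pi\mathbb{Z}$, notes that $|\partial_t\bar f|=|\tilde\kappa-\tfrac{L}{2A}I_{-1}|$ decays exponentially and uniformly in the spatial variable, and applies Parseval to conclude $\sum_k\bigl|\tfrac{d}{dt}(\hat f(k)/\sqrt{L})\bigr|^2\leqq Ce^{-2\gamma t}$; in particular $\hat f(1)/\sqrt{L}=r\,e^{2\pi i\sigma/L}$ converges exponentially, and since $r$ is eventually bounded below and $L\to L_\infty$, so does $\sigma$. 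I recommend you replace your ODE scheme for (3) by this two-line Parseval argument; with that substitution the proposal is complete.
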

\begin{proof}
\begin{itemize}
\item[{\rm (1)}]
First observe that
\[
	\vecc = \frac 1L \int_0^L \vecf \, ds .
\]
Since
\[
	\partial_t \vecf
	=
	\partial_s \left( \partial_s \vecf - \frac L { 2A } R \vecf \right)
	,
\]
we have
\[
	\int_0^L \partial_t \vecf \, ds = \veco .
\]
Therefore the time-derivative of $ \vecc $ is
\begin{align*}
	\frac d { dt } \vecc
	= & \
	\frac 1L \int_0^L \vecf \partial_t ( ds )
	- \frac 1 { L^2 } \frac { d L } { dt } \int_0^L \vecf \, ds
	\\
	= & \
	- \frac 1L \int_0^L \left\{ ( \partial_t \vecf \cdot \veckappa ) - \frac 1L \int_0^L ( \partial_t \vecf \cdot \veckappa ) \, ds \right\}
	\vecf \, ds
	\\
	= & \
	- \frac 1L \int_0^L \left\{ ( \partial_t \vecf \cdot \veckappa ) - \frac 1L \int_0^L ( \partial_t \vecf \cdot \veckappa ) \, ds \right\}
	\left( \vecf - \frac 1L \int_0^L \vecf \, ds \right) ds .
\end{align*}
Since
\[
	\partial_t \vecf \cdot \veckappa =
	\kappa^2 - \frac L { 2A } \kappa
	=
	\tilde \kappa^2 - \frac L { 2A } ( 2 I_{-1} - 1 ) \tilde \kappa - \frac \pi A I_{-1}
\]
decays exponentially as $ t \to \infty $,
and because
\[
	\left\| \vecf - \frac 1L \int_0^L \vecf \, ds \right\|_{ \mathbb{R}^2 }
	\leqq
	L
	\leqq C ,
\]
we find that $ \vecc $ converges exponentially to a vector,
say $ \vecc_\infty $,
as $ t \to \infty $.
Consequently $ \mathrm{Im} \vecf $ converges to a circle with center at $ \vecc_\infty $.
\item[{\rm (2)}]
It follows from Proposition \ref{prop2.2} and Lemma \ref{Lemma2.1} that
\begin{align*}
	\sum_{ k \in \mathbb{Z} } k^\ell ( k - 1 ) | \hat f(k) |^2
	= & \
	- i^{ - \ell } \left( \frac L { 2 \pi } \right)^{ \ell + 1 } \int_0^L \kappa F_\ell ds
	+
	i^{ 1 - \ell } \left( \frac L { 2 \pi } \right)^\ell \int_0^L \kappa F_{ \ell - 1 } ds
	\\
	= & \
	- i^{ - \ell } \left( \frac L { 2 \pi } \right)^{ \ell + 1 } \int_0^L \kappa F_\ell ds
	+
	i^{ - \ell } \left( \frac L { 2 \pi } \right)^\ell \int_0^L F_\ell ds
	\\
	= & \
	- i^{ - \ell } \left( \frac L { 2 \pi } \right)^{ \ell + 1 } \int_0^L \tilde \kappa F_\ell ds
\end{align*}
for $ \ell \geqq 2 $.
Since $ F_2 $ is a constant,
and since $ F_\ell $ with $ \ell \geqq 3 $ is a polynomial function of $ \kappa $ and its derivatives up to the $ (\ell - 3) $rd order,
they are bounded functions of $ ( s , t ) $.
Also $ L $ is bounded and
$ \tilde \kappa $ decays exponentially as $ t \to \infty $.
Therefore when $ \ell $ is odd,
\[
	\left| \sum_{ k \ne 0 , 1 } k^{ \ell + 1 } | \hat f(k) |^2 \right|
	\leqq
	C \left| \sum_{ k \in \mathbb{Z} } k^\ell ( k - 1 ) | \hat f(k) |^2 \right|
	\leqq
	C_\ell e^{ - \gamma_\ell t } .
\]
By the Parseval identity and the Sobolev embedding theorem we have
\[
	\left\| f (\cdot,t) - \hat f(0) (t) \varphi_0 (\cdot) - \hat f(1) (t) \varphi_1 (\cdot) \right\|_{ C^k ( \mathbb{R} / L(t) \mathbb{Z} ) }
	\leqq
	C_k e^{ - \gamma^\prime_k t }
\]
for any $ k $.
Using the expression of $ \mathbb{R}^2 $-valued functions,
we have
\[
	\vecf ( s,t )
	=
	\vecc (t)
	+
	r(t) \left( \cos \frac { 2 \pi ( s + \sigma (t) ) } { L(t) } , \sin \frac { 2 \pi ( s + \sigma (t) ) } { L(t) } \right)
	+
	\vecrho ( s , t )
	,
\]
\[
	\| \vecrho ( \cdot , t ) \|_{ C^k ( \mathbb{R} / L(t) \mathbb{Z} ) }
	\leqq
	C_k e^{ - \gamma^\prime_k t } .
\]
Since
\[
	\partial_s \vecf (s,t)
	=
	\frac { 2 \pi r(t) } { L(t) } \left( - \sin \frac { 2 \pi ( s + \sigma (t) ) } { L(t) } , \cos \frac { 2 \pi ( s + \sigma (t) ) } { L(t) } \right)
	+
	\partial_s \vecrho ( s , t )
	,
\]
we have
\[
	\left| r(t) - \frac { L(t) } { 2 \pi } \right|
	=
	\frac { L(t) } { 2 \pi }
	\left| \frac { 2 \pi r(t) } { L(t) } - 1 \right|
	=
	\frac { L(t) } { 2 \pi }
	\left| \| \partial_s \vecf ( s,t ) - \partial_s \vecrho (s,t) \| - 1 \right|
	\leqq
	C e^{ - \gamma_1 t } .
\]
Therefore $ r(t) $ converges to $ \displaystyle{ r_\infty = \frac { L_\infty } { 2 \pi } } $ exponentially as $ t \to \infty $.
\item[{\rm (3)}]
First we clarify the meaning of $ \partial_t \vecf $,
it is not $ \displaystyle{ \lim_{ h \to 0 } \frac { \vecf ( s , t + h ) - \vecf ( s , t ) } h } $ as one might expect.
The variable $ s $ in $ f( s,t ) $ is an element of $ \mathbb{R} / L( t) \mathbb{Z} $,
on the other hand,
the $ s $ in $ f ( s , t + h ) $ is in $ \mathbb{R} / L(t+h) \mathbb{Z} $.
Hence the above quotient is not well-defined.
To address this, let us introduce a function $ \bar { \vecf } $ on $ \mathbb{R} / 2 \pi \mathbb{Z} \times [ 0 , \infty ) $ given by
\[
	\bar { \vecf } ( u,t ) = \vecf \left( \frac { L(t) u } { 2 \pi } , t \right) .
\]
Then the variable $ u $ is independent of $ t $,
and $ \partial_t \vecf $ is given by
\[
	\partial_t \vecf
	=
	\lim_{ h \to 0 } \frac { \bar { \vecf } ( u, t + h ) - \bar { \vecf } ( u,t ) } h
	.
\]
We define a complex-valued function $ \bar f $ by
\[
	\bar { \vecf } ( u,t ) = (\Re \bar f ( u,t ) , \Im \bar f ( u,t )) ,
\]
and note that the Fourier expansion of $ \bar f $ is
\[
	\bar f ( u,t )
	=
	\sum_{ k \in \mathbb{Z} }
	\hat f (k) (t) \varphi_k \left( \frac { L(t) u } { 2 \pi } \right)
	=
	\sqrt { 2 \pi }
	\sum_{ k \in \mathbb{Z} }
	\frac { \hat f (k) (t) } { \sqrt { L(t) } }
	\phi_k (u)
	,
\]
where
\[
	\phi_k (u) = \frac 1 { \sqrt { 2 \pi } } e^{ iku } .
\]
Therefore we have
\[
	\partial_t \bar f
	=
	\sqrt{ 2 \pi } \sum_{ k \in \mathbb{Z} }
	\frac d { dt } \frac { \hat f (k) (t) } { \sqrt{ L(t) } } \phi_k (u) ,
\]
and
\[
	\int_0^{ 2 \pi }
	| \partial_t \bar f |^2 du
	=
	2 \pi \sum_{ k \in \mathbb{Z} }
	\left| \frac d { dt } \frac { \hat f (k) (t) } { \sqrt{ L(t) } } \right|^2
\]
by the Parseval identity.
Since
\[
	| \partial_t \bar f |^2
	=
	\| \partial_t \bar { \vecf } \|^2
	=
	\| \partial_t \vecf \|^2
	=
	\left\|
	\veckappa - \frac L { 2A } \vecnu \right\|^2
	=
	\left| \tilde \kappa - \frac L { 2A } I_{-1} \right|^2
\]
decays exponentially and uniformly in spatial variable as $ t \to \infty $,
we have
\[
	\sum_{ k \in \mathbb{Z} }
	\left| \frac d { dt } \frac { \hat f (k) (t) } { \sqrt{ L(t) } } \right|^2
	\leqq
	C e^{ - 2 \gamma t }
\]
for some $ C > 0 $ and $ \gamma > 0 $.
In particular,
\[
	\left| \frac d { dt } \frac { \hat f (1) (t) } { \sqrt { L(t) } } \right|^2
	\leqq
	C e^{ - 2 \gamma t }
	.
\]
Also, it is not difficult to see that
\[
	\left| \frac d { dt } \frac { \hat f (1) (t) } { \sqrt{ L(t) } } \right|^2
	=
	\left| \frac d { dt } r(t) \right|^2
	+
	4 \pi^2  r(t)^2
	\left| \frac d { dt } \left( \frac { \sigma (t) } { L(t) } \right) \right|^2
	.
\]
Since $ r(t) $ and $ L(t) $ converge exponentially to positive constants,
so does $ \sigma (t) $ to some $ \sigma_\infty \in \mathbb{R} / 2 \pi \mathbb{Z} $.
\item[{\rm (4)}]
We have
\begin{align*}
	\tilde { \vecf } ( \theta , t )
	= & \
	\vecc (t)
	+
	r(t) \left( \cos 2 \pi \theta , \sin 2 \pi \theta \right)
	+ \tilde { \vecrho } ( \theta , t )
	\\
	= & \
	\tilde { \vecf }_\infty ( \theta )
	+
	\vecc (t) - \vecc_\infty
	+
	( r(t) - r_\infty )
	\left( \cos 2 \pi \theta , \sin 2 \pi \theta \right)
	+ \tilde { \vecrho } ( \theta , t )
	,
\end{align*}
where
\[
	\tilde { \vecrho } ( \theta , t )
	=
	\vecrho ( L(t) \theta - \sigma (t) , t ) .
\]
Therefore the estimates for $ \vecc(t) $,
$ r(t) $,
and $ \vecrho ( \cdot , t ) $ yield
\[
	\| \tilde { \vecf } ( \cdot , t ) - \tilde { \vecf }_\infty \|_{ C^k ( \mathbb{R} / \mathbb{Z} ) }
	\leqq
	C_k e^{ - \tilde \gamma_k t }
	.
\]
\item[{\rm (5)}]
The above estimate implies that $ \mathrm{Im} \tilde { \vecf }( \cdot , t ) $ is the boundary of a bounded domain $ \Omega (t) $ when $ t $ is sufficiently large.
Since $ \tilde \kappa $ converges to $ 0 $ uniformly,
and since $ L $ goes to a positive constant $ L_\infty $ as $ t \to \infty $ uniformly in $s$,
\[
	\kappa = \frac { 2 \pi } L + \tilde \kappa
\]
is strictly positive for large $ t $.
Consequently $ \partial \Omega (t) $ is a strictly convex curve.
\item[{\rm (6)}]
Let $ D_r ( \vecc ) $ be the closed disk with center $ \vecc $ and the radius $ r $.
When $ t $ is sufficiently large,
\begin{align*}
	&
	d_H ( \overline { \Omega(t) } , D_{ r_\infty } ( \vecc_\infty ) )
	\\
	& \quad
	\leqq
	d_H ( \overline { \Omega(t) } , D_{ r (t) } ( \vecc (t) ) )
	+
	d_H ( D_{ r(t) } ( \vecc (t) ) , D_{ r(t) } ( \vecc_\infty ) )
	+
	d_H ( D_{ r(t) } ( \vecc_\infty ) , B_{ r_\infty } ( \vecc_\infty ) )
	\\
	& \quad
	\leqq
	C \left( 
	\| \tilde { \vecrho } ( \cdot , t ) \|_{ C^0 ( \mathbb{R} / \mathbb{Z} ) }
	+
	\| \vecc (t) - \vecc_\infty \|
	+
	| r(t) - r_\infty |
	\right)
	\\
	& \quad
	\leqq
	C e^{ - \gamma t }
\end{align*}
for some $ C > 0 $ and $ \gamma > 0 $.
\item[{\rm (7)}]
Clearly we have
\[
A ( \vecb - \vecc )
	=
	\iint_{ \Omega (t) } ( \vecx - \vecc ) \, d \vecx.
\]
We define 
\[
\vecb = ( b_1 , b_2 ), \quad \vecc = ( c_1 , c_2 ), \quad
 b = b_1 + i b_2 ,  \quad c = c_1 + i c_2.
\]
From the divergence theorem, we have
\begin{align*}
	A ( b - c )
	= & \
	\iint_{ \Omega (t) } \{ ( x_1 - c_1 ) + i ( x_2 - c_2 ) \} d \vecx
	\\
	= & \
	\frac 12 \iint_{ \Omega (t) }
	\mathrm{div} ( \left( ( x_1 - c_1 )^2 , i ( x_2 - c_2 )^2 \right) d \vecx
	\\
	= & \
	- \frac 12 \int_0^L
	\left( ( f_1 - c_1 )^2 , i ( f_2 - c_2 )^2 \right) \cdot \vecnu \, ds
	\\
	= & \
	- \frac 12 \int_0^L
	\left\{ ( f_1 - c_1 )^2 ( - \partial_s f_2 )
	+
	i ( f_2 - c_2 )^2 \partial_s f_1 \right\} ds.
\end{align*}
Since, for $j = 1 , \, 2 $, 
\[
	\int_0^L ( f_j - c_j )^2 \partial_s f_j \, ds = 0,
\]
we obtain 
\begin{align*}
	A ( b - c )
	= & \
	- \frac i 2 \int_0^L
	\left\{
	( f_1 - c_1 )^2 ( \partial_s f_1 + i \partial_s f_2 )
	+
	( f_2 - c_2 )^2 ( \partial_s f_1 + i \partial_s f_2 )
	\right\} ds
	\\
	= & \
	- \frac i 2 \int_0^L | f - c |^2 \partial_s f \, ds.
\end{align*}
It holds that 
\[
	| f - c |^2
	=
	\left| r e^{ \frac { 2 \pi i ( s + \sigma ) } L } + \rho \right|^2
	=
	r^2 + 2 r \Re \rho e^{ \frac { 2 \pi i ( s + \sigma ) } L } + | \rho |^2
\]
and 
$ r $ is independent of $ s $. Hence we show
\[
	A ( b - c )
	=
	- \frac i 2 \int_0^L
	\left\{
	2 r \Re \rho e^{ \frac { 2 \pi i ( s + \sigma ) } L } + | \rho |^2
	\right\}
	\partial_s f \, ds.
\]
Therefore we have
\[
        \| A ( \vecb - \vecc ) \|
        \leqq
	| A ( b - c ) |
	\leqq
	\frac 12 \int_0^L
	\left( 2r | \rho | + | \rho |^2 \right) ds
	\leqq
	C e^{ - \gamma t }.
\]
\end{itemize}
Thus we have shown each of the claims in the theorem.
\qed
\end{proof}
\subsection{The area-preserving curvature flow}
\par
It is well-known that if the initial curve is convex,
then any solution of the area-preserving curvature flow
\be
	\partial_t \vecf =  \tilde \kappa  \vecnu
	\label{area-preserving_flow}
\ee
converges to a round circle as $ t \to \infty $ as
proved by Gage \cite{G}.
In this subsection,
we give a proof of this fact without the convexity assumption {\it assuming} the global existence.
\par
If the initial curve is convex,
then the convexity remains for all $ t > 0 $ by the maximal principle.
In this case the exponential decay of $ I_{-1} (t) $ is easily derived from Gage's inequality.
First we show the decay without the convexity assumption.
\begin{thm}
Assume that $ \vecf $ is a global solution of {\rm \pref{area-preserving_flow}} such that the initial rotation number is $ 1 $.
Then $ \mathrm{Im} \vecf $ converges to a circle exponentially as $ t \to \infty $ in the sense that
\begin{gather}
	0 \leqq L(t)^2 - 4 \pi A (t)
	\leqq
	( L(0)^2 - 4 \pi A(0) ) \exp \left( - \frac { 16 \pi^2 } { L(0)^2 } t \right)
	,
	\\
	\left| L(t) - 2 \sqrt{ \pi A(0) } \right|
	\leqq
	\frac { L(0)^2 - 4 \pi A(0) }
	{ 4 \sqrt{ \pi A(0) } }
	\exp \left( - \frac { 16 \pi^2 } { L(0)^2 } t \right) .
\end{gather}
\label{Theorem7}
\end{thm}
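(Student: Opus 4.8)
The plan is to exploit two elementary structural features of the area-preserving flow \pref{area-preserving_flow}. First, the signed area is exactly conserved: since $\partial_t\vecf = \tilde\kappa\vecnu$ we have $\frac{dA}{dt} = -\int_0^L \partial_t\vecf\cdot\vecnu\,ds = -\int_0^L \tilde\kappa\,ds = 0$, so $A(t)\equiv A(0)$. Second, the length is non-increasing: by the first variation of length and $\veckappa = \kappa\vecnu$, $\frac{dL}{dt} = -\int_0^L \partial_t\vecf\cdot\veckappa\,ds = -\int_0^L \tilde\kappa\,\kappa\,ds = -\int_0^L \tilde\kappa^2\,ds \leqq 0$, where we used $\int_0^L\tilde\kappa\,ds = 0$. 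Hence $L(t)\leqq L(0)$ for all $t\geqq 0$. Moreover the isoperimetric inequality $I_{-1}\geqq 0$ (established in Section 2) gives $L(t)^2 \geqq 4\pi A(t) = 4\pi A(0)$, so $L(t)\geqq 2\sqrt{\pi A(0)}$; both of these bounds will be used below (here $A(0)>0$ is implicit in the statement).

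Next I would track the isoperimetric deficit $D(t):=L(t)^2 - 4\pi A(t)$. Since $A$ is constant and $\int_0^L\tilde\kappa^2\,ds = L^{-1}I_0$,
\[
	\frac{d}{dt}\big(L^2 - 4\pi A\big) = 2L\,\frac{dL}{dt} = -2L\int_0^L \tilde\kappa^2\,ds = -2I_0 \leqq -16\pi^2 I_{-1} = -\frac{16\pi^2}{L^2}\big(L^2 - 4\pi A\big),
\]
where the inequality is exactly the sharp estimate $I_0\geqq 8\pi^2 I_{-1}$ of Theorem \ref{Theorem1}. Using $L\leqq L(0)$ and $D\geqq 0$ we freeze the coefficient at its initial value and obtain the linear differential inequality $\frac{d}{dt}D \leqq -\frac{16\pi^2}{L(0)^2}D$. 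Integrating (Gronwall) yields $D(t)\leqq D(0)\exp\!\big(-16\pi^2 t/L(0)^2\big)$, which together with $D\geqq 0$ is precisely the first assertion.

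For the length estimate I would simply write, using $A(t)=A(0)$ and $L(t)\geqq 2\sqrt{\pi A(0)}>0$,
\[
	\big| L(t) - 2\sqrt{\pi A(0)}\,\big| = \frac{L(t)^2 - 4\pi A(0)}{L(t) + 2\sqrt{\pi A(0)}} = \frac{D(t)}{L(t)+2\sqrt{\pi A(0)}} \leqq \frac{D(t)}{4\sqrt{\pi A(0)}},
\]
and then insert the bound on $D(t)$ from the previous paragraph to conclude $\big| L(t) - 2\sqrt{\pi A(0)}\,\big| \leqq \dfrac{L(0)^2 - 4\pi A(0)}{4\sqrt{\pi A(0)}}\exp\!\big(-16\pi^2 t/L(0)^2\big)$.

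I do not anticipate a serious obstacle: the whole argument is a one-line Gronwall estimate, and its only nontrivial inputs are (i) the conservation of $A$ and the monotonicity of $L$, which let the coefficient in the differential inequality for $D$ be replaced by the constant $L(0)^{-2}$, and (ii) the sharp constant $8\pi^2$ from Theorem \ref{Theorem1}, which is exactly what produces the decay rate $16\pi^2/L(0)^2$. It is worth noting that the weaker bound \pref{notsharp} (constant $4\pi^2$) would give, by the same computation, only the rate $8\pi^2/L(0)^2$; thus the improvement in Theorem \ref{Theorem1} coming from $|\vecf^\prime|\equiv 1$ is what is being recorded in the exponents here.
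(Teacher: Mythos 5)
Your proof is correct and follows essentially the same route as the paper: conservation of $A$, monotonicity of $L$, the sharp inequality $I_0 \geqq 8\pi^2 I_{-1}$ of Theorem \ref{Theorem1} to get the linear differential inequality for $L^2-4\pi A$ with frozen coefficient $16\pi^2/L(0)^2$, and then Gronwall plus the factorization of $L-2\sqrt{\pi A(0)}$. (The paper's proof cites Theorem \ref{Theorem2} at this step, but the inequality actually used is that of Theorem \ref{Theorem1}, exactly as you identify.)
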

\begin{proof}
Since
\[
	\frac { d L } { dt }
	=
	- \int_0^L \partial_t \vecf \cdot \veckappa ds
	=
	- \int_0^L \tilde \kappa^2 ds
	,
\]
we have
\[
	L \leqq L (0) .
\]
From this,
the area-preserving property,
and Theorem \ref{Theorem2},
we have
\begin{align*}
	\frac d { dt } ( L^2 - 4 \pi A )
	= & \
	2L \frac { dL } { dt }
	=
	- 2L \int_0^L \tilde \kappa^2 ds
	=
	- 2 I_0
	\\
	\leqq & \
	- \frac { 16 \pi^2 } { L^2 } ( L^2 - 4 \pi A )
	\leqq
	- \frac { 16 \pi^2 } { L(0)^2 } ( L^2 - 4 \pi A )
	.
\end{align*}
Therefore
\[
	L^2 - 4 \pi A
	\leqq
	( L(0)^2 - 4 \pi A(0) ) \exp \left( - \frac { 16 \pi^2 } { L(0)^2 } t \right)
	.
\]
Hence we have $ \displaystyle{ \lim_{ t \to \infty } L = 2 \sqrt{ \pi A(0) } } $,
and
\[
	\left| L - 2 \sqrt{ \pi A(0) } \right|
	\leqq
	\frac { L^2 - 4 \pi A } { L + 2 \sqrt { \pi A(0) } }
	\leqq
	\frac { L(0)^2 - 4 \pi A(0) }
	{ 4 \sqrt{ \pi A(0) } }
	\exp \left( - \frac { 16 \pi^2 } { L(0)^2 } t \right) .
\]
\qed
\end{proof}
\par
For this flow the limit value of $ L $ and the decay rate are given explicitly from the initial data.
\par
We can prove the following theorem similarly to the proofs of Theorems \ref{Theorem5}--\ref{Theorem6} using Theorem \ref{Theorem7} instead of Theorem \ref{Theorem4}.
\begin{thm}
The claims (1)--(7) in Theorem \ref{Theorem6} hold also for global solutions of the area-preserving flow.
\label{Theorem8}
\end{thm}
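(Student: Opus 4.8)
The plan is to run the same machinery developed for Theorem \ref{Theorem4} through Theorem \ref{Theorem6}, simply replacing the role played there by Theorem \ref{Theorem4} (exponential decay of $I_{-1}$ for the Jiang--Pan flow) with Theorem \ref{Theorem7} (exponential decay of $L^2 - 4\pi A$, hence of $I_{-1}$, for the area-preserving flow). The three ingredients that were actually used in the earlier proofs are: (i) $I_{-1}(t) \leqq Ce^{-\lambda t}$; (ii) $L(t)$ and $A(t)$ converge exponentially to positive constants $L_\infty$, $A_\infty$; and (iii) $\int_0^\infty I_0 \, dt \leqq C$, together with the uniform bound on $L^2/A$. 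For the present flow, (i) and (ii) are immediate from Theorem \ref{Theorem7}: $A \equiv A(0)$ is constant, $L$ is monotone decreasing to $L_\infty = 2\sqrt{\pi A(0)}$ with the explicit exponential rate, and $I_{-1} = (L^2 - 4\pi A)/L^2 \leqq C e^{-16\pi^2 t/L(0)^2}$. For (iii), note that $\frac{d}{dt}(L^2 - 4\pi A) = -2I_0$ directly gives $\int_0^\infty I_0\, dt = \tfrac12(L(0)^2 - 4\pi A(0)) < \infty$, which is even cleaner than the Jiang--Pan case.

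With these in hand, I would first redo Theorem \ref{Theorem5} (exponential decay of every $I_\ell$). The evolution equation for this flow is $\partial_t \vecf = \tilde\kappa\vecnu = \partial_s^2\vecf - \frac{2\pi}{L}R\partial_s\vecf$ after using $\vecnu = R\vectau$ and $\kappa = \frac{2\pi}{L}+\tilde\kappa$; this is again a parabolic equation with a (now simpler) non-local term, so the same smoothing applies and we may assume smooth initial data. The computation of $\frac{d}{dt}I_\ell$ proceeds exactly as in the proof of Theorem \ref{Theorem5}: one derives $\partial_t\partial_s^k\tilde\kappa = \partial_s^{k+2}\tilde\kappa + \sum_{m=0}^{3} L^{-(3-m)}P_m^k(\tilde\kappa)$ (with possibly fewer terms, since there is no $L/(2A)$ factor — but the same polynomial structure), integrates by parts to get a leading dissipative term $-\frac{2}{L^2}I_{\ell+1}$, and controls the lower-order polynomial terms by the Gagliardo--Nirenberg inequality \pref{J} and Young's inequality, absorbing $\frac{\epsilon}{L^2}I_{\ell+1}$ into the left side and using Theorem \ref{Theorem3} plus $I_{-1}\leqq\epsilon$ for large $t$ to close. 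The bootstrap showing $I_0$ is uniformly bounded (via $\int_0^\infty I_0\,dt < \infty$) goes through verbatim. The conclusion is $\frac{d}{dt}I_\ell + \frac{1}{CL^2}I_\ell \leqq Ce^{-\mu t}$, hence exponential decay.

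Once every $I_\ell$ decays exponentially, the proof of Theorem \ref{Theorem6} transfers with only cosmetic changes. In part (1), $\frac{d}{dt}\vecc$ is again expressed as an integral against $\partial_t\vecf\cdot\veckappa = \tilde\kappa\kappa = \tilde\kappa^2 + \frac{2\pi}{L}\tilde\kappa$, which decays exponentially and uniformly, while $\|\vecf - \frac1L\int_0^L\vecf\,ds\|\leqq L \leqq C$; so $\vecc(t)\to\vecc_\infty$ exponentially. Parts (2) and (3) use only the Parseval identity, the bound $\sum_{k\ne 0,1}k^{\ell+1}|\hat f(k)|^2 \leqq Ce^{-\gamma_\ell t}$ (which follows from the $I_\ell$ decay exactly as before), and the fact that $|\partial_t\bar f|^2 = \|\tilde\kappa\vecnu\|^2 = \tilde\kappa^2$ decays uniformly — if anything simpler here. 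Parts (4)--(7) are purely geometric consequences of the exponential $C^k$-convergence of $\tilde{\vecf}(\cdot,t)$ to a round circle, plus $\kappa = \frac{2\pi}{L}+\tilde\kappa > 0$ for large $t$, and do not reference the flow equation at all. I expect no serious obstacle: the only point requiring a moment's care is verifying that the polynomial reaction terms $P_m^\ell(\tilde\kappa)$ arising in $\partial_t\partial_s^\ell\tilde\kappa$ for this flow still satisfy the scaling needed for \pref{J} and Young to close the estimate for $\frac{d}{dt}I_\ell$ — but since the flow differs from the Jiang--Pan flow only by the absence of the $L/(2A)$-terms and by the replacement of that non-local speed with the constant $2\pi/L$, the structure is strictly simpler and the same argument applies. $\square$
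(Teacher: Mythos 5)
Your proposal is correct and follows exactly the route the paper takes: the paper's entire ``proof'' of Theorem \ref{Theorem8} is the single remark that Theorems \ref{Theorem5}--\ref{Theorem6} carry over with Theorem \ref{Theorem7} in place of Theorem \ref{Theorem4}, and your write-up is a faithful (indeed more detailed) verification of precisely the ingredients that make this substitution work, including the correct simplifications $\partial_t\vecf\cdot\veckappa=\tilde\kappa^2+\frac{2\pi}{L}\tilde\kappa$, $\|\partial_t\vecf\|^2=\tilde\kappa^2$, and $\int_0^\infty I_0\,dt=\frac12(L(0)^2-4\pi A(0))$.
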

\par\noindent
{\bf Acknowledgment}.
The first author was partly supported by Grant-in-Aid for Scientific Research (C) 
(17K05310),
Japan Society for the Promotion Science.
The authors express their appreciation to Professor Shigetoshi Yazaki and Professor Tetsuya Ishiwata for sharing information of related articles,
and for discussions.
The authors also would like to express their gratitude to Professor Neal Bez for English language editing.

\end{document}